\newcommand{\sA}{\mathcal{A}}
\newcommand{\sP}{\mathcal{P}}
\newcommand{\sQ}{\mathcal{Q}}
\newcommand{\sW}{\mathcal{W}}
\newcommand{\CC}{\mathbb{C}}
\newcommand{\FF}{\mathbb{F}}
\newcommand{\PP}{\mathbb{P}}
\newcommand{\QQ}{\mathbb{Q}}
\newcommand{\RR}{\mathbb{R}}
\newcommand{\ZZ}{\mathbb{Z}}
\newcommand{\Qbar}{\overline{\QQ}}
\newcommand{\ty}{\tilde{y}}
\newcommand{\sep}{\mathrm{sep}}
\newcommand{\sJ}{\mathcal{J}}
\newcommand{\sK}{\mathcal{K}}
\DeclareMathOperator{\He}{He}
\DeclareMathOperator{\Gal}{Gal}
\DeclareMathOperator{\Aut}{Aut}
\DeclareMathOperator{\Pic}{Pic}
\DeclareMathOperator{\Div}{Div}
\DeclareMathOperator{\EC}{EC}
\DeclareMathOperator{\Res}{Res}
\DeclareMathOperator{\PSp}{PSp}
\DeclareMathOperator{\PGL}{PGL}
\DeclareMathOperator{\Sp}{Sp}
\DeclareMathOperator{\rk}{rk}
\newtheorem{theorem}{Theorem}[section]
\newtheorem{prop}[theorem]{Proposition}
\newtheorem{lemma}[theorem]{Lemma}
\newtheorem{cor}[theorem]{Corollary}
\theoremstyle{definition}
\newtheorem{dfn}[theorem]{Definition}
\newtheorem{remark}[theorem]{Remark}
\newtheorem{question}[theorem]{Question}
\begin{document}
\title{Arithmetic aspects of the Burkhardt quartic threefold}
\date{April 11, 2018}
\author{Nils Bruin}
\address{Department of Mathematics, Simon Fraser University, Burnaby, BC, V5A 1S6, Canada}
\email{nbruin@sfu.ca}
\thanks{This work is partially funded by NSERC}
\author{Brett Nasserden}
\address{Department of Pure Mathematics, University of Waterloo, Waterloo, ON, N2L 3G1, Canada}
\email{bnasserd@uwaterloo.ca}

\subjclass[2010]{11G10, 14K10, 11G18, 14H10}
\keywords{Genus 2 curves, Abelian Varieties, Moduli spaces, Level structure}

\begin{abstract}
We show that the Burkhardt quartic threefold is rational over any field of characteristic distinct from 3. We compute its zeta function over finite fields. We realize one of its moduli interpretations explicitly by determining a model for the universal genus $2$ curve over it, as a double cover of the projective line. We show that the $j$-planes in the Burkhardt quartic mark the order $3$ subgroups on the Abelian varieties it parametrizes, and that the Hesse pencil on a $j$-plane gives rise to the universal curve as a discriminant of a cubic genus one cover.
\end{abstract}
\maketitle
\section{Introduction and results}
We consider the \emph{Burkhardt quartic threefold} in $\PP^4$, defined by the equation
\[B\colon f(y_0,\ldots,y_4):=y_0(y_0^3+y_1^3+y_2^3+y_3^3+y_4^3)+3y_1y_2y_3y_4=0.\]
This threefold has been studied extensively over $\CC$ and can be characterized in many ways.
\begin{enumerate}
\item It has a linear action of the finite simple group $\PSp_4(\FF_3)$. In fact it is defined by the unique quartic invariant of this linear representation.
\item It has $45$ nodal singularities, which is the maximum for a quartic threefold \cite{Varchenko83}. Furthermore, up to projective equivalence it is the \emph{only} one \cite{JongShepVen90}.
\item It has various interpretations in terms of moduli spaces \cites{Elkies99, vGeemen92, Hunt96}. Most important for us is that it is birational to the moduli space $\sA_{2,3}$ of Abelian surfaces equipped with a full level-$3$ structure (see Definition~\ref{D:level3structure})\footnote{More precisely, the normalization of the projective dual is the Satake compactification of $\sA_{2,3}$, see \cite{FreitagSalvati04}.}. This in fact holds over $\ZZ[1/3,\zeta_3]$, see \cites{Hunt96,HuntWeintraub94,vdGeer87}.
\end{enumerate}
The geometry of the Burkhardt quartic gives rise to various intricate combinatorial configurations that have been been extensively studied \cite{Baker46} (see \cite{Hunt96} for a modern account and \cite{RenSamSturmfels14} for a modern, tropical description).

For arithmetic applications one also needs to consider $B$ over base fields that are not algebraically closed. For instance, Todd \cite{Todd1936} proved that $B$ is rational over $\CC$ and in 1942 Baker \cite{Baker46}*{\S6} exhibited an explicit parametrization defined over $\QQ(\zeta_3)$, but Baker's parametrization does not naturally descend to $\QQ$. We show that, with some different choices, it does. This also provides us with an easy way to determine the zeta function of $B$ over any finite field of characteristic different from $3$, generalizing results in  \cite{HoffmanWeintraub01} for fields of cardinality $q\equiv1\pmod{3}$.

Other questions arise from the modular interpretation of $B$. An open part of $\sA_{2,3}$, isomorphic to an open part of $B$, corresponds to Jacobians of genus $2$ curves, so one expects there to be a universal genus $2$ curve $C_\alpha$, defined over that open part such that its Jacobian $\sJ_\alpha$ realizes the moduli interpretation. Such a curve should admit a model as a double cover of $\PP^1$, ramified at $6$ points. The geometric moduli for this are given in \cite{Hunt96}, but the field of moduli of a genus $2$ curve famously doesn't need to agree with its field of definition. In this case, the fact that $\sA_{2,3}$ is a fine moduli space guarantees the obstruction is trivial, but explicitly showing this requires work.

Naturally, the moduli interpretation also implies that $C_\alpha$ should come equipped with divisor classes of order $3$, marking the level structure on its Jacobian. We explicitly determine how these arise from the geometry of $B$.

We also show how $C_\alpha$ can be obtained from the degree $6$ branch locus of certain cubic genus $1$ covers of $\PP^1$ that can be directly constructed from a point $\alpha\in\PP^1$ using the geometry of $B$.

Section~\ref{S:results} below states the results while Sections~\ref{S:param}-\ref{S:moduli} provide the proofs. Appendix~\ref{Appendix} contains most relevant formulae in a computer-readable form. These formulae are also available in electronic form from \cite{BruinNasserden17e}.

\section{Statement of Results}
\label{S:results}
In Section~\ref{S:param} we adapt Baker's parametrization to descend to $\QQ$. We obtain the following.

\begin{theorem}\label{T:ratpar}
Let $k$ be a field of characteristic not equal to $3$. Then $B$ is birational to $\PP^3$ over $k$ by the map
$\psi\colon B\to\PP^3$;
$(y_0:y_1:y_2:y_3:y_4)\mapsto (t_0:t_1:t_2:t_3)$, where
\[
\begin{aligned}
t_0&= y_0(y_0^2-y_0y_1+y_1^2),\\
t_1&= y_0(y_1y_2-y_0y_3-y_0y_4),\\
t_2&= y_0(y_0y_2 - y_1y_2 + y_1y_3 + y_1y_4),\\
t_3&= y_0y_1y_2 - y_0y_1y_3 + y_1^2y_3 - y_0^2y_4.
\end{aligned}
\]
\end{theorem}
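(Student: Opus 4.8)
The plan is to prove the theorem by producing an explicit rational inverse and then reducing birationality to a couple of polynomial identities. Write $\psi=(g_0{:}g_1{:}g_2{:}g_3)$, where $g_0,\dots,g_3\in\ZZ[y_0,\dots,y_4]$ are the four cubics displayed in the statement, and suppose we have in hand homogeneous forms $\phi_0,\dots,\phi_4\in\ZZ[1/3][t_0,\dots,t_3]$ of a common degree, not all zero, defining a rational map $\phi\colon\PP^3\to\PP^4$. I would then check the two identities (i) $f(\phi_0,\dots,\phi_4)=0$, so that $\phi$ maps into $B$, and (ii) $g_i(\phi_0,\dots,\phi_4)\,t_j=g_j(\phi_0,\dots,\phi_4)\,t_i$ for all $i,j$, so that $\psi\circ\phi=\mathrm{id}_{\PP^3}$; both are to be verified as identities in $\ZZ[1/3][t_0,\dots,t_3]$. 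Because they are identities over $\ZZ[1/3]$, they persist after base change to any field $k$ of characteristic $\neq 3$, and this is exactly where the hypothesis is used: at characteristic $3$ the polynomial degenerates, $f=y_0(y_0+y_1+y_2+y_3+y_4)^3$, and $B$ is no longer a variety of the expected kind.

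Given (i) and (ii): identity (ii) shows $\phi$ is generically injective, hence birational onto its image; that image is an irreducible closed subvariety of dimension $3$ of the ($3$-dimensional, absolutely irreducible) threefold $B$, hence equals $B$; so $\phi\colon\PP^3\to B$ is birational and $\psi$ is its inverse. One could instead verify directly that $\phi\circ\psi=\mathrm{id}_B$ by computing modulo $f$ in $k[y_0,\dots,y_4]$, but it is automatic. Along the way I would also record that $\psi$ is a genuine dominant rational map — it restricts to a morphism on a dense open subset of $B$, for instance wherever $y_0\neq 0$ and $y_0^2-y_0y_1+y_1^2\neq 0$, where the first coordinate $g_0$ is nonzero — and similarly for $\phi$, so that the compositions above are meaningful; absolute irreducibility and $3$-dimensionality of $B$ in characteristic $\neq 3$ is classical.

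It remains to exhibit $\phi$. Baker~\cite{Baker46}*{\S6} already constructs such a parametrization from the configuration of planes and conics on $B$, but one of the choices in his construction is only rational over $\QQ(\zeta_3)$, which is why his map does not descend to $\QQ$. I would retrace that construction, replacing the offending datum by one defined over the prime field; this forces a different choice of projective coordinates and produces precisely the $\psi$ above together with its inverse. Alternatively and more mechanically, one can take the system $\{\,t_i g_j(y)-t_j g_i(y)=0\,\}_{i,j}$ together with $f(y)=0$ and eliminate $y_0,\dots,y_4$ over the function field $\QQ(t_0,\dots,t_3)$: the generic fibre of $\psi$ turns out to be a single $\QQ(t_0,\dots,t_3)$-rational point — already a proof that $\psi$ is birational — and clearing denominators, in which only powers of $3$ appear, yields $\phi$ with coefficients in $\ZZ[1/3]$. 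The resulting formulae are collected in the appendix and in~\cite{BruinNasserden17e}.

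The \emph{main obstacle} is this last step: carrying out Baker's geometric recipe, or the elimination, while keeping every choice defined over $\QQ$ and arranging that only the prime $3$ gets inverted, so that the result is valid in every characteristic other than $3$. Once $\phi$ is found, identities (i) and (ii) are a finite and entirely routine symbolic computation.
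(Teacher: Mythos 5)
Your proposal follows essentially the same route as the paper: the authors exhibit an explicit $\phi\colon\PP^3\dashrightarrow B$ (their Theorem on the full parametrization) obtained from lines meeting three pairwise skew $j$-planes --- two Galois-conjugate ones plus one defined over $\QQ$, which is precisely the Galois-stable replacement for Baker's choice that you anticipate as the ``main obstacle'' --- then verify $\psi\circ\phi=\mathrm{id}$ as an identity over $\ZZ[\tfrac{1}{3}]$ and conclude from the irreducibility and dimension of $B$, exactly as you argue. The elimination procedure you mention as an alternative is in fact how the paper computes the explicit formulas (a Gr\"obner basis with an elimination order applied to the graph ideal).
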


In Section~\ref{S:zeta} 
we use this map and the parametrization inverse to it for computing the zeta function of $B$ over arbitrary finite fields $\FF_q$ of characteristic not $3$. It may be interesting to compare with the approach in \cite{HoffmanWeintraub01} where a fibration of $B$ is used to compute the zeta function for $q\equiv 1 \pmod{3}$.

\begin{theorem}\label{T:zetaB}
Let $q$ be a prime power not divisible by $3$, and let $\epsilon=\left(\frac{q}{3}\right)$. Then
\[Z(B/\FF_q,T)=\dfrac{(1-qT)^{15}(1-\epsilon qT)^{14}}{(1-T)(1-q^2T)^{10}(1-\epsilon q^2T)^6(1-q^3T)}.\]
\end{theorem}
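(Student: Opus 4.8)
The plan is to leverage the explicit birational map $\psi$ of Theorem~\ref{T:ratpar}, together with the rational inverse $\psi^{-1}\colon\PP^3\dashrightarrow B$ recorded in Appendix~\ref{Appendix}, to compute $\#B(\FF_{q^n})$ for every $n\geq1$; the zeta function is then $Z(B/\FF_q,T)=\exp\bigl(\sum_{n\geq1}\#B(\FF_{q^n})T^n/n\bigr)$. Concretely, writing $\epsilon^n=\left(\tfrac{q}{3}\right)^n=\left(\tfrac{q^n}{3}\right)$, the stated formula is equivalent to the identity
\[
\#B(\FF_{q^n})=1-\bigl(15+14\epsilon^n\bigr)q^n+\bigl(10+6\epsilon^n\bigr)q^{2n}+q^{3n}\qquad(n\geq1),
\]
because then $\log Z(B/\FF_q,T)=\sum_{n}\#B(\FF_{q^n})T^n/n$ sums, term by term via $-\log(1-\alpha T)=\sum_n\alpha^nT^n/n$, to the logarithm of the claimed rational function.

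Since $\psi$ is defined over $\QQ$, hence over every $\FF_q$ with $3\nmid q$, it is Frobenius-equivariant. Let $U\subseteq B$ be the largest open subscheme on which $\psi$ restricts to an isomorphism onto an open $V\subseteq\PP^3$; then $\#U(\FF_{q^n})=\#V(\FF_{q^n})$ and
\[
\#B(\FF_{q^n})=\#\PP^3(\FF_{q^n})-\#(\PP^3\setminus V)(\FF_{q^n})+\#(B\setminus U)(\FF_{q^n}).
\]
The core of the proof is thus to describe $B\setminus U$ and $\PP^3\setminus V$ explicitly. On the $\PP^3$ side, $\PP^3\setminus V$ is the union of the base locus of $\psi^{-1}$ with the divisors of $\PP^3$ that $\psi^{-1}$ contracts; on the $B$ side, $B\setminus U$ is the base locus of $\psi$ together with the subvarieties of $B$ contracted by it. Reading these off from the defining polynomials, one finds in each case a configuration assembled from planes, lines, smooth conics and finitely many points; on $B$ this configuration involves in particular certain of the $45$ nodes and of the special planes of $B$, whose incidence combinatorics under the $\PSp_4(\FF_3)$-action is classical. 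One then stratifies $B\setminus U$ and $\PP^3\setminus V$ into locally closed pieces with elementary zeta functions --- affine and projective spaces, smooth conics, and pieces obtained by deleting lower-dimensional ones --- keeping track of the $\Gal(\Qbar/\FF_q)$-action. This is where $\epsilon$ enters: a smooth conic with no $\FF_q$-rational point contributes $q^n+\epsilon^n$ rather than $q^n+1$ points, and collections of planes or points defined only over $\QQ(\zeta_3)$ are permuted by Frobenius according to whether $q\equiv1$ or $2\pmod 3$. Inclusion--exclusion over the strata then produces $\#(\PP^3\setminus V)(\FF_{q^n})$ and $\#(B\setminus U)(\FF_{q^n})$ as explicit functions of $q^n$ and $\epsilon^n$, and substituting into the displayed identity yields the point count above.

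I expect the main obstacle to be the explicit determination of the exceptional loci and, above all, of the incidences among the contracted planes, lines, conics and nodes, since the inclusion--exclusion in the last step is only as reliable as this combinatorial data. A secondary difficulty is to track fields of definition and Galois orbits carefully enough to pin down the exact exponents $15,14,10,6$ and the precise dependence on $\epsilon$. Useful checks are to specialize to $q\equiv1\pmod3$ and compare with the computation implicit in \cite{HoffmanWeintraub01}, and to verify $\#B(\FF_q)$ directly for several small primes $q\neq3$.
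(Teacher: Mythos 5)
Your proposal follows essentially the same route as the paper: Lemma~\ref{L:Bisom} exhibits explicit closed sets $J_\phi\subset\PP^3$ and $J_\psi\subset B$ (unions of conics, lines and $j$-planes with known Galois orbits, see Remark~\ref{R:Jcomponents}) such that $\phi$ restricts to an isomorphism between the complements, and the stated formula is then obtained by inclusion--exclusion over the components, with $\epsilon$ entering exactly through pointless conics and Frobenius-conjugate components as you describe. The only differences are presentational: the paper works multiplicatively with zeta functions of the strata via Lemma~\ref{L:zetacalc} rather than with point counts, and it delegates the voluminous incidence combinatorics (more than $200$ components for $J_\psi$) to a computer algebra computation.
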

\begin{cor}\label{C:zetaBdesing}
With the notation above, let $\tilde{B}$ be the desingularization of $B$ obtained by blowing up the singularities on $B$. Then
\[Z(\tilde{B}/\FF_q,T)=\dfrac{1}{(1-T)(1-qT)^{36}(1-\epsilon qT)^{25}(1-\epsilon q^2T)^{25}(1-q^2T)^{36}(1-q^3T)}
\]
\end{cor}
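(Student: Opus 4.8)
The plan is to deduce $Z(\tilde B/\FF_q,T)$ from Theorem~\ref{T:zetaB} by a blow-up computation. Write $\Sigma\subset B$ for the reduced singular locus (the $45$ nodes) and $\pi\colon\tilde B\to B$ for the blow-up of $B$ at $\Sigma$, with exceptional locus $E=\pi^{-1}(\Sigma)$. Since $\pi$ restricts to an isomorphism over $B\setminus\Sigma$, point counting gives $\#\tilde B(\FF_{q^n})=\#B(\FF_{q^n})-\#\Sigma(\FF_{q^n})+\#E(\FF_{q^n})$ for all $n$, equivalently
\[
Z(\tilde B/\FF_q,T)=Z(B/\FF_q,T)\cdot\frac{Z(E/\FF_q,T)}{Z(\Sigma/\FF_q,T)}.
\]
Each node is an ordinary double point, so the $\pi$-fibre over it is the projectivised tangent cone: a smooth quadric surface in $\PP^3$ that becomes $\PP^1\times\PP^1$ over $\overline{\FF}_q$. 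I would use that a smooth quadric surface $V/\FF_{q^m}$ has $Z(V/\FF_{q^m},S)=\bigl((1-S)(1-q^mS)^2(1-q^{2m}S)\bigr)^{-1}$ if split and $\bigl((1-S)(1-q^{2m}S^2)(1-q^{2m}S)\bigr)^{-1}$ if not, and that a Frobenius-conjugate pair of such quadrics each defined over $\FF_{q^2}$ contributes $Z(V/\FF_{q^2},T^2)$. Thus the task reduces to describing the $\Gal(\overline{\FF}_q/\FF_q)$-orbits on $\Sigma$ and, for orbits of $\FF_q$-rational nodes, the splitting type of the associated quadric.

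Next I would list the nodes via the Jacobian criterion: the $18$ points of $\{y_0=0\}$ having exactly two nonzero coordinates $y_i,y_j$ with $y_i^3+y_j^3=0$, and the $27$ points which, scaled to $y_0=1$, read $(1:-\omega_1:-\omega_2:-\omega_3:-\omega_4)$ with each $\omega_i^3=1$ and $\omega_1\omega_2\omega_3\omega_4=1$. All node coordinates lie in $\QQ(\zeta_3)$, so Frobenius acts through $\Gal(\FF_q(\zeta_3)/\FF_q)$: trivially if $q\equiv1\pmod3$, and by $\zeta_3\mapsto\zeta_3^2$ otherwise. Hence for $q\equiv1\pmod3$ all $45$ nodes are rational, while for $q\equiv2\pmod3$ exactly the $7$ nodes with all coordinates in $\{0,\pm1\}$---the six points having a single $-1$ among $y_1,\dots,y_4$, and $(-1:1:1:1:1)$---are rational, and the remaining $38$ form $19$ conjugate pairs interchanged by $\zeta_3\mapsto\zeta_3^2$. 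I would also record that the $\QQ(\zeta_3)$-rational symmetries of $B$ (permutations of $y_1,\dots,y_4$ together with the scalings $y_i\mapsto\zeta_3^{a_i}y_i$ with $\sum a_i\equiv0$) act transitively on each of the two families of nodes, so within each family all exceptional quadrics are isomorphic over $\QQ(\zeta_3)$; it therefore suffices to analyse one representative of each family.

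To obtain the splitting types I would compute the degree-$2$ part of $f$ in local coordinates at a representative node. At $(0:1:-1:0:0)$ this part is projectively $uv-wz$, which is split over every field; at $(-1:1:1:1:1)$ it is $\sum_{i<j}v_iv_j-\sum_iv_i^2$, whose discriminant is $\sim-3$, so that quadric is split exactly when $-3\in(\FF_q^\times)^2$, i.e.\ when $q\equiv1\pmod3$. Consequently all $45$ exceptional quadrics are split over $\QQ(\zeta_3)$, and over $\FF_q$ the locus $E$ is: $45$ split quadrics over $\FF_q$ when $q\equiv1\pmod3$; and $6$ split quadrics over $\FF_q$, one non-split quadric over $\FF_q$ (at $(-1:1:1:1:1)$), and $19$ quadrics over $\FF_{q^2}$ that are split there (since $-3$ is a square in $\FF_{q^2}$) when $q\equiv2\pmod3$. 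Substituting these together with $Z(\Sigma/\FF_q,T)$ into the displayed identity and invoking Theorem~\ref{T:zetaB}, the factors $(1-T^\bullet)$ contributed by the quadrics cancel against $Z(\Sigma/\FF_q,T)$, and the surviving factors $(1\pm qT)$ and $(1\pm q^2T)$ recombine---via $1+cT=1-\epsilon cT$ in the case $\epsilon=-1$---to give exactly the stated expression for $Z(\tilde B/\FF_q,T)$.

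I expect the real work to be the middle two steps: pinning down the Frobenius-orbit structure of the $45$ nodes and, above all, the split/non-split dichotomy of the exceptional quadrics, since that is precisely where the quadratic character $\epsilon=\left(\frac{q}{3}\right)$ enters (through the class of $-3$ modulo squares). Once these are in hand, the final assembly is a mechanical manipulation of zeta functions of quadric surfaces.
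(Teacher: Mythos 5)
Your proposal is correct and follows essentially the same route as the paper: blow up the $45$ nodes, stratify them by Frobenius orbit ($45$ rational for $q\equiv1\pmod 3$; $6$ rational split, one non-split at $(-1:1:1:1:1)$, and $19$ conjugate pairs for $q\equiv2\pmod3$), determine the split/non-split type of each exceptional quadric via the tangent cone, and assemble the correction factors against Theorem~\ref{T:zetaB}. Your explicit local computations of the tangent cones and the discriminant $\sim-3$ at $(-1:1:1:1:1)$ supply details the paper's proof leaves implicit, and they check out.
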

It is known that the complement of the \emph{Hessian} $\He(B)$ on $B$ is isomorphic to the part of $\sA_{2,3}$ that parametrizes Jacobians of genus $2$ curves. By computing the zeta function of $B\cap\He(B)$ as well, we find the following.

\begin{cor}\label{C:pointcount}
\[\#(B\setminus \He(B))(\FF_q)=\begin{cases}
(q-4)(q-7)(q-13)&\text{if }q\equiv 1\pmod{3}\\
(q-2)(q^2-2q-1)&\text{if }q\equiv 2\pmod{3}
\end{cases}\]
\end{cor}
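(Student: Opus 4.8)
The plan is to split $\#(B\setminus\He(B))(\FF_q)=\#B(\FF_q)-\#(B\cap\He(B))(\FF_q)$ and evaluate the two terms separately. The first is read off directly from Theorem~\ref{T:zetaB}: the point count is the sum of the reciprocal poles minus the sum of the reciprocal zeros of $Z(B/\FF_q,T)$, counted with multiplicity, so
\[
\#B(\FF_q)=1+q^3+(10+6\epsilon)q^2-(15+14\epsilon)q,\qquad\epsilon=\left(\tfrac{q}{3}\right),
\]
i.e.\ $q^3+16q^2-29q+1$ when $q\equiv1\pmod 3$ and $q^3+4q^2-q+1$ when $q\equiv2\pmod 3$.

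For the second term I would invoke the classical description of the intersection of the Burkhardt quartic with its Hessian (see \cite{Hunt96}): $B\cap\He(B)$ is the reduced union of the $40$ $j$-planes, a single $\PSp_4(\FF_3)$-orbit of planes $P\cong\PP^2$ indexed by the $40$ order-$3$ subgroups of $(\ZZ/3\ZZ)^4$, i.e.\ the $40$ points of $\PP^3(\FF_3)$; this is consistent on the nose with degrees, since $\He(B)$ has degree $10$ and $4\cdot10=40=40\cdot1$. The incidence pattern of the configuration --- which pairs of $j$-planes meet in a line rather than just a point, and the loci where three or more of them meet --- is part of the classical combinatorial geometry of $B$ studied by Baker and Hunt \cites{Baker46,Hunt96}, and given this data $\#(B\cap\He(B))(\FF_q)$ follows by inclusion--exclusion over the planes and their mutual intersections, using $\#\PP^2(\FF_q)=q^2+q+1$, $\#\PP^1(\FF_q)=q+1$, and the point strata. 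When $q\equiv1\pmod 3$ the whole configuration --- defined over $\QQ(\zeta_3)$ --- is already $\FF_q$-rational, and the bookkeeping gives $\#(B\cap\He(B))(\FF_q)=40q^2-200q+365$. When $q\equiv2\pmod 3$ the Frobenius acts on the configuration through the nontrivial element of $\Gal(\FF_{q^2}/\FF_q)$, which (a direct check of the $\PSp_4(\FF_3)$-structure should show) fixes $8$ of the $40$ $j$-planes and interchanges the remaining $32$ in $16$ conjugate pairs; an $\FF_q$-point of $B\cap\He(B)$ then lies either on one of the $8$ rational planes or on one of the $16$ Frobenius-stable lines $P\cap\operatorname{Frob}(P)$, and the same inclusion--exclusion over the Frobenius-rational strata gives $\#(B\cap\He(B))(\FF_q)=8q^2-4q-1$. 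Subtracting from $\#B(\FF_q)$ and factoring then produces $q^3-24q^2+171q-364=(q-4)(q-7)(q-13)$ in the first case and $q^3-4q^2+3q+2=(q-2)(q^2-2q-1)$ in the second, which is the claim. (Alternatively, one could transport $B\cap\He(B)$ along the birational map $\psi$ of Theorem~\ref{T:ratpar} and count on its image in $\PP^3$.)

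The step I expect to be the main obstacle is making the last computation honest over $\FF_q$ with $q\equiv2\pmod 3$: one must pin down the exact $\QQ$-structure --- equivalently the Frobenius action --- on the configuration of $40$ $j$-planes together with all of its pairwise and higher-order intersections, so that the inclusion--exclusion is carried out over precisely the correct set of $\FF_q$-rational strata. The rationality of the relevant five-dimensional $\PSp_4(\FF_3)$-representation over $\QQ(\zeta_3)$, together with the elementary behaviour of $\zeta_3$ modulo $q$, is what forces the answer to depend only on $\epsilon$. The remaining ingredients --- reading $\#B(\FF_q)$ off Theorem~\ref{T:zetaB}, and the lengthy but entirely classical incidence bookkeeping for the $40$ planes --- are routine.
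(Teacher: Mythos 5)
Your proposal follows essentially the same route as the paper: the paper also computes $\#(B\cap\He(B))(\FF_q)$ by inclusion--exclusion over the $40$ $j$-planes and their intersection strata (using the formal incidence-matrix procedure of Section~\ref{S:zeta}, executed by computer since the stratification has hundreds of components), and subtracts from the point count read off Theorem~\ref{T:zetaB}. Your intermediate totals $40q^2-200q+365$ and $8q^2-4q-1$, and the Frobenius action you describe on the $j$-plane configuration for $q\equiv 2\pmod 3$ ($8$ rational planes, $4$ conjugate pairs meeting in a line, $12$ meeting in a point), agree with the paper; the only part you leave unexecuted is exactly the bookkeeping the paper also delegates to the machine.
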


We see there are no genus $2$ curves that have Jacobians with full $3$-torsion over $\FF_4,\FF_7,\FF_{13}$, and that therefore any genus $2$ curve $C$ over $\QQ$ that has $J_C[3]\simeq (\ZZ/3)^2\times\mu_3^2$  must have bad reduction at $2,7,13$. We make no claim about the reduction of their Jacobians. Note that there are genus $2$ curves over $\FF_7,\FF_{13}$ for which the divisor class groups of degree $0$ have cardinality $81$.

The fact that $\sA_{2,3}$ is a fine moduli space also implies there exists a \emph{universal genus 2 curve} $C_\alpha$ defined over $B\setminus \He(B)$ such that its Jacobian $\sJ_\alpha$ has a full level-$3$ structure on it. A level $3$-structure for us is an isomorphism $(\ZZ/3)^2\times\mu_3^2\to\sJ_\alpha[3]$ as group schemes equipped with alternating pairing.
Hunt \cite{Hunt96} describes the data defining such a curve geometrically in the form of a plane conic with $6$ marked points on it, but that does not immediately lead to a model defined over the base field (see for instance \cite{Mestre91}).

He also provides a model for the variety representing $\Pic^1(C_\alpha)$ in $\PP^8$. This gives a certificate that the universal curve can indeed be defined over the base field, but extracting a model as a double cover of $\PP^1$ is not entirely straightforward. In Section~\ref{S:explicit_curve} we do this using the classical theory of Weddle and Kummer surfaces and find the following model.

\begin{prop}\label{P:explicit_curve}
Let $\alpha=(1:\alpha_1:\cdots:\alpha_4)\in B\setminus\He(B)\setminus\{x_4=0\}$. Then $\sJ_\alpha$ arises as the Jacobian of the hyperelliptic curve
\[y^2+G_3y=\lambda_3H_3^3\]
where
\[
\begin{split}
H_3&=\alpha_2 x^2 - \alpha_3 xz - \alpha_1 \alpha_4 z^2,\\
G_3&=(\alpha_1^3 \alpha_4^3 + 3 \alpha_1 \alpha_2 \alpha_3 \alpha_4^4 + 2 \alpha_2^3 \alpha_4^3 + \alpha_2^3 + \alpha_3^3 \alpha_4^3) x^3\\
   &\quad+3 \alpha_2(\alpha_4^3+1) (\alpha_1^2 \alpha_4^2-\alpha_2 \alpha_3) x^2z -3 \alpha_3 (\alpha_4^2+1) (\alpha_1^2 \alpha_4^2-\alpha_2 \alpha_3) xz^2\\
   &\quad+ (-2 \alpha_1^3 \alpha_4^6 - \alpha_1^3 \alpha_4^3 + 3 \alpha_1 \alpha_2 \alpha_3 \alpha_4^4 + \alpha_2^3 \alpha_4^3 - \alpha_3^3)z^3,\\
\lambda_3&=\alpha_4^3 (\alpha_4^3+1) (\alpha_1 \alpha_4-\alpha_2-\alpha_3)(\alpha_1^2 \alpha_4^2 + \alpha_1 \alpha_2 \alpha_4 + \alpha_1 \alpha_3 \alpha_4 + \alpha_2^2 - \alpha_2 \alpha_3 + \alpha_3^2).
\end{split}
\]
\end{prop}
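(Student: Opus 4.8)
The plan is to follow the strategy sketched in the introduction: begin with Hunt's explicit model of $\Pic^1(C_\alpha)$ in $\PP^8$ — which, transported along the birational parametrization of Theorem~\ref{T:ratpar}, is defined over $k$ and realizes $\Pic^1(C_\alpha)$ as a principally polarized abelian surface embedded by the linear system $|3\Theta|$ of a symmetric theta divisor — and then climb down the tower of theta systems. Passing to the Kummer quotient, i.e.\ to the image under $|2\Theta|$, produces the Kummer surface $\sK_\alpha\subset\PP^3$, a quartic with its $16$ nodes and $16$ tropes. From $\sK_\alpha$ I would invoke the classical Weddle correspondence: $\sK_\alpha$ is birational to a Weddle quartic $\sW_\alpha$ in $\PP^3$ whose six nodes are six points lying on a unique twisted cubic $\Gamma$, and $C_\alpha$ is recovered as the double cover of $\Gamma\simeq\PP^1$ branched at those six nodes. (Equivalently, projecting $\sK_\alpha$ from one of its nodes realizes it as a double plane branched over six lines tangent to a conic, the tangency points being the Weierstrass points; this is the same data seen dually.) Reading off a sextic supported on the six branch points gives a plane model $v^2=F(x,z)$ of $C_\alpha$.

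It then remains to recognize that $F$ has the special shape $F=G_3^2+4\lambda_3H_3^3$ with $H_3$ binary quadratic, $G_3$ binary cubic and $\lambda_3$ a scalar; completing the square rewrites $v^2=F$ as $y^2+G_3y=\lambda_3H_3^3$. This shape is forced by the presence of a rational divisor class of order $3$ on $\sJ_\alpha$: such a class is $[D-g^1_2]$ for an effective degree-$2$ divisor $D$, and the relation $3D\sim 3g^1_2$ says precisely that some function $y-G_3$ (with $G_3$ a binary cubic) has divisor $3D-3g^1_2$; multiplying by $y+G_3$ then gives $\text{sextic}-G_3^2=4\lambda_3H_3^3$, where $H_3$ cuts out $D$ together with its hyperelliptic conjugate. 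Here the level-$3$ structure enters: it supplies, over $k(\alpha)$, a rational subgroup $(\ZZ/3)^2\subset\sJ_\alpha[3]$, hence rational order-$3$ classes, and it is exactly this rigidification that lets one organize the $16$ nodes and tropes of $\sK_\alpha$ — equivalently the six nodes of $\sW_\alpha$ together with a rational structure on $\Gamma$ (which, being a rational normal curve of odd degree, carries a rational point and so is $\PP^1_{k(\alpha)}$) — over $k(\alpha)$ rather than only over $\overline{k(\alpha)}$. Carrying the elimination through then yields the rational functions $H_3$, $G_3$, $\lambda_3$ of $\alpha_1,\dots,\alpha_4$ in the statement; the restriction $\alpha_4\neq0$, beyond the removal of $\He(B)$ (where $\sJ_\alpha$ ceases to be a Jacobian and $C_\alpha$ degenerates), is the locus on which this choice of rational $3$-torsion and of affine chart keeps every intermediate object non-degenerate.

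Finally one verifies that the curve obtained really has $\sJ_\alpha$ as its Jacobian, with no twist. In genus $2$ an isomorphism of principally polarized Jacobians over a field is induced by an isomorphism of the underlying curves — there is no quadratic-twist ambiguity, the hyperelliptic involution accounting for $-1$ on the Jacobian — and every step above is a construction applied canonically to $\sJ_\alpha$, so $\mathrm{Jac}(C_\alpha)\cong\sJ_\alpha$ over $k(\alpha)$ by construction. Independently, one can compute the Igusa invariants of $y^2+G_3y=\lambda_3H_3^3$ and check that, as functions on $B$, they match the pullback of the coarse moduli coordinates along $B\dashrightarrow\sA_{2,3}\to\sA_2$; since $\sA_{2,3}$ is a fine moduli space, this identity of rational functions forces $C_\alpha$ to be the universal curve on a dense open set, hence on all of $B\setminus\He(B)\setminus\{\alpha_4=0\}$ by specialization.

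The genuine obstacle is the middle step: pushing Hunt's $\PP^8$-model down to $\sK_\alpha$ and then to $\sW_\alpha$ while keeping the nodes, tropes and six Weddle points sorted into $k(\alpha)$-rational packets, and performing the resultant and elimination computations that produce $F$ and expose its special shape. This is a large symbolic computation; the resulting formulae for $H_3$, $G_3$, $\lambda_3$ are collected in Appendix~\ref{Appendix} and in~\cite{BruinNasserden17e}.
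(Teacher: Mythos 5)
Your overall route is essentially the one the paper takes: Coble's quadrics give the Weddle surface and its symmetroid $\sK_\alpha^*$, the marked trope cuts out a twisted cubic $L_\alpha\simeq\PP^1$, the six nodes on it give a sextic $F$, and the decomposition $F=G_3^2+4\lambda_3H_3^3$ is exactly the certificate of a rational order-$3$ divisor class from Proposition~\ref{P:cyclic}. (The paper starts from Coble's four quadrics directly rather than descending from Hunt's $\PP^8$ model of $\Pic^1$, precisely to avoid the ``climb down the tower of theta systems'' you propose, but that is a difference of packaging, not of substance.)

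The genuine gap is in your final paragraph. The claim that ``there is no quadratic-twist ambiguity'' by Torelli is not available here: the construction factors through the Kummer surface $\sK_\alpha=\sJ_\alpha/\langle-1\rangle$ and its dual, and these are insensitive to quadratic twist --- $\sJ_\alpha$ and all its quadratic twists have the same Kummer surface, so six marked points on $L_\alpha\simeq\PP^1$ determine $C_\alpha$ only up to quadratic twist, exactly as the paper warns. Your fallback via Igusa invariants fails for the same reason: Igusa invariants classify the sextic only up to $\GL_2$-equivalence and scaling, i.e.\ they identify the curve only up to $\overline{k}$-isomorphism, so matching them along $B\dashrightarrow\sA_2$ again leaves the twist undetermined, and the fineness of $\sA_{2,3}$ does not convert an identity of coarse invariants into an identification of the universal object. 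The paper closes this by a separate argument: among all quadratic twists, only $C_\alpha$ and $C_\alpha^{(-3)}$ carry rational $3$-torsion (and these two carry isomorphic level structures), and the exhibited decomposition $F=G_3^2+4\lambda_3H_3^3$ certifies a rational $3$-torsion point on the stated model, which pins down the twist. You in fact have this mechanism in hand in your middle paragraph --- you just need to run it in the other direction, as the step that selects the twist, rather than asserting the twist is determined ``by construction.''
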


While the theory of Weddle and Kummer surfaces requires the base field to be not of characteristic $2$, we can extend our model to be over $\ZZ[1/3]$ and check it has good reduction at $2$ as well, and argue by specialization.

In Section~\ref{S:marking} we consider how to explicitly mark the level-$3$ structure on $C_\alpha$. For this we use the Kummer surface $\sK_\alpha=\sJ_\alpha/\langle-1\rangle$, which has a natural model in $\PP^3$, as well as its projective dual $\sK_\alpha^*$, which is isomorphic to $\sK_\alpha$ over an algebraically closed base field, but not in general. Following a classical construction (see for instance \cite{Coble1917}*{p.~360}), Hunt describes how $\sK_\alpha^*$ can be obtained as the image under a projection $\pi_\alpha\colon \PP^4\to \PP^3$ of the enveloping cone of the cubic polar of $B$ at $\alpha$ (see for instance \cite{Dolgachev12}*{\S1.1} for definitions of these).

The classical combinatorics of $B$ shows that $B\cap\He(B)$ consists of $40$ planes, each containing $9$ of the singularities of $B$. These planes are classically referred to as $j$-planes. Furthermore, there are $40$ hyperplanes that intersect $B$ in the union of $4$ $j$-planes, called Steiner primes. Conversely, every $j$-plane lies in $4$ Steiner primes.

\begin{prop}\label{P:Jplane_3torsion}
Let $\alpha\in B\setminus \He(B)$, let $\pi_\alpha\colon \PP^4\to \PP^3$ be the projection from $\alpha$, and let $\sK_\alpha^*\subset\PP^3$ be the dual Kummer surface obtained by projecting the enveloping cone of the cubic polar of $B$ at $\alpha$.
\begin{enumerate}
\item If $J$ is a $j$-plane, then $\pi_\alpha(J)$ is tangent to $\sK_\alpha^*$, and hence a point on $\sK_\alpha$.
\item The point on $\sK_\alpha$ determined by $J$ lifts to $3$-torsion points on $\sJ_\alpha$.
\item Two $3$-torsion points on $\sJ_\alpha$ pair trivially under the Weil pairing if and only if they are coming from $j$-planes that lie in a common Steiner prime.
\item Hence, Steiner primes correspond to the maximal isotropic subgroups of $\sJ_\alpha[3]$.
\end{enumerate}
\end{prop}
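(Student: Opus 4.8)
The plan is to reduce the statement to its classical complex counterpart (in the circle of ideas of Coble and Hunt), then make it explicit and uniform in the characteristic, after which (3)--(4) are read off from the symplectic combinatorics. First, the bookkeeping. It is classical that the $(40_4)$ configuration of $j$-planes and Steiner primes on $B$ is the symplectic generalized quadrangle $W(3)$ of order $(3,3)$: two $j$-planes lie on at most one common Steiner prime, each Steiner prime carries $4$ $j$-planes, and each $j$-plane lies on $4$ Steiner primes. On the arithmetic side, geometrically $(\sJ_\alpha[3],e_3)$ is a rank-$4$ symplectic $\FF_3$-space, with $(3^4-1)/2=40$ order-$3$ subgroups (lines) and $(3+1)(3^2+1)=40$ maximal isotropic (Lagrangian) subgroups, each Lagrangian containing $(3^2-1)/2=4$ lines, each line lying in $4$ Lagrangians, and each line lying in $13$ two-dimensional subspaces of which exactly $13-4=9$ are non-degenerate --- the last number matching the $9$ nodes of $B$ on the corresponding $j$-plane --- and the incidence geometry of lines and Lagrangians is again $W(3)$. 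The group $\PSp_4(\FF_3)$ acts on $B$ (with $f$ its quartic invariant) and hence transitively on $j$-planes and on Steiner primes.

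For (1) and (2), fix a $j$-plane $J$ and $\alpha=(1:\alpha_1:\cdots:\alpha_4)\in B\setminus\He(B)$. Since every $j$-plane lies in $\He(B)$ we have $\alpha\notin J$, so $\pi_\alpha$ embeds $J$ as a plane in $\PP^3$. The cubic polar of $B$ at $\alpha$ passes through $\alpha$ (Euler), so its projection from $\alpha$ is two-to-one and its enveloping cone is a quartic cone with vertex $\alpha$; hence $\pi_\alpha(J)\cap\sK_\alpha^*$ is the plane quartic curve obtained by restricting to $J\cong\PP^2$ the quartic form defining that cone. I would verify that this quartic is singular --- i.e. $\pi_\alpha(J)$ is a tangent plane of $\sK_\alpha^*$, so a point of the projective dual $(\sK_\alpha^*)^\vee=\sK_\alpha$ --- and that the point it defines on $\sK_\alpha=\sJ_\alpha/\langle-1\rangle$ is the image of a pair $\pm Q$ of $3$-torsion points. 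Over $\CC$ both facts are classical (\cite{Coble1917}*{p.~360}, \cite{Hunt96}); to obtain them over any field of characteristic $\neq3$, write $\sK_\alpha^*$ and that quartic form by the formulae of the Appendix, whose coefficients lie in $\ZZ[1/3,\zeta_3]$ and, after the descent of Theorem~\ref{T:ratpar}, in $\ZZ[1/3]$: the truth over $\CC$ forces the polynomial identities, which then hold over every $\ZZ[1/3]$-algebra, while characteristic $2$, where Weddle--Kummer theory is unavailable, is covered by the same good-reduction/specialization argument used for Proposition~\ref{P:explicit_curve}, the finitely many a priori ``bad'' primes being dispatched by the explicit formulae. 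To pin down which $3$-torsion class a given $j$-plane yields I would use the model $y^2+G_3y=\lambda_3H_3^3$ of Proposition~\ref{P:explicit_curve}: on it $\operatorname{div}(y)=3(D_1-D_\infty)$, where $D_1$ is the degree-$2$ divisor lying over the two zeros of $H_3$ and $D_\infty$ is a fibre of the hyperelliptic map, so $[D_1-D_\infty]$ is $3$-torsion by inspection; with the explicit Kummer-embedding formulae for a genus-$2$ Jacobian one checks that $\pm[D_1-D_\infty]$ maps to the point produced above from the $j$-plane used in building that model, and transitivity of $\PSp_4(\FF_3)$ on $j$-planes (the whole construction being $\PGL_5$-natural and $\PSp_4(\FF_3)\subset\PGL_5$ preserving $B$) carries the identity to every $j$-plane. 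That the resulting map $J\mapsto\langle Q_J\rangle$ is a bijection onto the $40$ order-$3$ subgroups of $\sJ_\alpha[3]$ is again true over $\CC$ and spreads out the same way.

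With the bijection $J\mapsto\langle Q_J\rangle$ in hand, (3) and (4) follow from the symplectic combinatorics above. Since $e_3$ is the alternating form, it is trivial on $\langle Q_J\rangle\times\langle Q_{J'}\rangle$ exactly when $Q_J,Q_{J'}$ span an isotropic plane, i.e. lie in a common Lagrangian; so (3) says the relation ``$J,J'$ lie in a common Steiner prime'' on $j$-planes matches ``$Q_J,Q_{J'}$ span an isotropic plane'' on the corresponding lines. Both are $\PSp_4(\FF_3)$-invariant symmetric relations; the second is the unique one of valency $12$ on lines (the only non-diagonal $\PSp_4(\FF_3)$-orbits on ordered pairs of distinct lines are ``isotropic span'' and ``non-degenerate span'', of sizes $40\cdot12$ and $40\cdot27$), while the first has valency $12$ because the $4$ Steiner primes through a $j$-plane pairwise share no further $j$-plane, giving $4\cdot3=12$ neighbours. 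Hence the two relations coincide, which is (3) (and matches the classical statement). For (4): the maximal isotropic subgroups of $(\sJ_\alpha[3],e_3)$ are exactly the Lagrangians; the $4$ $j$-planes in a Steiner prime are pairwise related by (3), so their lines are pairwise isotropic-spanning, and --- there being no $3$-dimensional isotropic subspace --- all lie in one Lagrangian, which (having $4$ lines) is their span; conversely the $4$ lines in a Lagrangian are pairwise isotropic-spanning, so the corresponding $j$-planes are pairwise coplanar and, $W(3)$ having no triangles, lie on one Steiner prime. Thus Steiner primes correspond bijectively to the maximal isotropic subgroups of $\sJ_\alpha[3]$.

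I expect the main obstacle to be the middle step: genuinely building the bridge from the projective geometry of $B$ --- polars, enveloping cones, the dual Kummer surface $\sK_\alpha^*$ --- to the $3$-torsion and Weil pairing of $\sJ_\alpha$. The complex picture is classical, so the real work lies in making the chain enveloping cone $\to\sK_\alpha^*\to\sK_\alpha\to\sJ_\alpha$ explicit enough to identify a definite $3$-torsion class with each $j$-plane, and in controlling the field of definition --- descending everything to $\ZZ[1/3]$ via Theorem~\ref{T:ratpar} and handling characteristic $2$, the one case where Weddle--Kummer theory does not directly apply, by good reduction. Once that is done, (3) and (4) require no further geometry, only the symplectic combinatorics of $W(3)$.
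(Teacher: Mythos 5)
Your proposal is correct and, for parts (1), (2) and (4), follows essentially the route the paper takes: explicit verification of tangency and of the order-$3$ property for the distinguished $j$-planes $J_i$ (certified, exactly as in the paper, by the decomposition $F=G_i^2+4\lambda_iH_i^3$ of Proposition~\ref{P:cyclic}, which is the same fact as your observation that $\operatorname{div}(y)=3(D_1-\kappa)$ on the model $y^2+Gy=\lambda H^3$), the passage from the tangent plane $\pi_\alpha(J)$ to the representing divisor via the trope (Proposition~\ref{P:kummer_find_intersection} in the paper, the ``explicit Kummer-embedding formulae'' in your write-up), spreading out over $\ZZ[1/3]$ with a separate good-reduction argument in characteristic $2$, and finally transitivity of $\Sp_4(\FF_3)$ to carry the computation to all $40$ $j$-planes. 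Where you genuinely diverge is part (3): the paper first proves that the specific pair $J_1,J_2$ (lying in the Steiner prime $y_0=0$) pairs trivially by an arithmetic trick --- their $3$-torsion points are rational over a base field not containing $\zeta_3$, and a Galois-stable cube root of unity is $1$ --- and only then invokes the two-orbit structure on order-$9$ subgroups; you instead avoid verifying any instance at all, arguing that ``lie in a common Steiner prime'' and ``span an isotropic plane'' are both $\Sp_4(\FF_3)$-invariant relations of valency $12$, and that $12$ and $27$ being distinct forces the two orbit-relations to match up under the equivariant bijection. Your valency count is correct and the argument is sound; it is arguably cleaner in that it needs no pairing computation, while the paper's version is more robust in that it does not lean quite as heavily on knowing that $J\mapsto\langle Q_J\rangle$ intertwines the full group actions (it only needs transitivity on the two orbits of pairs). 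Both arguments ultimately rest on that equivariance, which you and the paper assert at about the same level of detail, so neither has an advantage in rigor there.
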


We use that non-principal degree $0$ divisor classes on genus $2$ curves can be represented uniquely by $[D-\kappa]$, where $D$ is an effective divisor of degree $2$ and $\kappa$ is a canonical divisor. The following geometric description of the relation between points on the Kummer surface and the divisor $D$ corresponding to it turned out useful, and we were unable to find it elsewhere in the literature.

\begin{prop}\label{P:kummer_find_intersection}
Let $C$ be a curve of genus $2$ over a field of characteristic different from $2$ and let $[D-\kappa]\in\Pic^0(C)$ be a divisor class represented by the effective divisor $D\in\Div^2(C)$. Let $T_D$ be the tangent plane to the dual Kummer surface $\sK_C^*$ and let $L$ be the conic cut out on $\sK_C^*$ by the distinguished trope on $\sK_C^*$ corresponding to the image of the identity element of $\sK_C$. Then $L\simeq\PP^1$, and the hyperelliptic cover $C\to\PP^1$ is naturally realized as $x\colon C\to L$, with the $6$ ramification points being the $6$ nodes of $\sK_C^*$ that $L$ passes through.
We have
\[x_*(D)=T_D\cdot L.\]
\end{prop}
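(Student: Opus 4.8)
The plan is to argue intrinsically on $J=\Pic^0(C)$, using the classical descriptions of $\sK_C$ and its dual inside $\PP(V^*)$ and $\PP(V)$ with $V=H^0(J,\mathcal{O}(2\Theta))$ for a symmetric theta divisor $\Theta$. Here $\phi\colon J\to\sK_C\subset\PP(V^*)$, $v\mapsto[\,s\mapsto s(v)\,]$, is the Kummer map (the quotient by $v\mapsto-v$), and $\sK_C^*\subset\PP(V)$ is the image of $v\mapsto[s_v]$, where $s_v\in V$ is a section with divisor $\Theta_v+\Theta_{-v}$ (writing $\Theta_w=\Theta+w$); that this class is $\mathcal{O}(2\Theta)$ follows from the theorem of the square, and for $v\notin J[2]$ the hyperplane $\{s(v)=0\}$ is the embedded tangent plane of $\sK_C$ at $\phi(v)$, because the image of $\Theta_v+\Theta_{-v}$ in $\sK_C$ has an ordinary node there (two branches, from $v\in\Theta_v$ and $-v\in\Theta_{-v}$), while for $v\in J[2]$ one has $s_v=\theta_v^2$ and $[s_v]$ is the node of $\sK_C^*$ dual to the trope $2\Theta_v$ of $\sK_C$. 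So $\sK_C^*$ is again a Kummer surface with nodes exactly $\{[s_v]:v\in J[2]\}$, and $[s_v]$ is smooth on $\sK_C^*$ iff $v\notin J[2]$.

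First I would pin down $L$. The image of the identity of $J$ in $\sK_C$ is the node $n_0=\phi(0)$, so the distinguished trope of $\sK_C^*$ is the hyperplane $\tau_0=\{[s]\in\PP(V):s(0)=0\}$ dual to the $k$-rational point $n_0$; since nodes of $\sK_C$ correspond to tropes of $\sK_C^*$, this is indeed a trope, and $L=\tau_0\cap\sK_C^*$. To realise $x\colon C\to L$ over $k$ — where the naive recipe via a symmetric theta divisor fails, as $C$ need not have a rational theta characteristic — I would use instead the $k$-rational theta divisor $\Theta_1=\{[P]:P\in C\}\subset\Pic^1(C)$: for $P\in C$ let $\sigma_P\in V$ be a section with divisor $(\Theta_1-[P])+(\Theta_1-[\iota P])$ ($\iota$ the hyperelliptic involution; again this class is $\mathcal{O}(2\Theta)$), and set $x(P)=[\sigma_P]$. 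Since $[P]\in\Theta_1$ we get $\sigma_P(0)=0$, so $x(C)\subseteq\tau_0\cap\sK_C^*=L$; as $[\sigma_P]=[\sigma_{\iota P}]$ and the divisor of $\sigma_P$ recovers the pair $\{[P],[\iota P]\}$, the map $x$ factors through a (separable, injective, dominant, hence) isomorphism $C/\iota\xrightarrow{\ \sim\ }L$. Thus $L\simeq C/\iota\simeq\PP^1_k$ and $x$ is the hyperelliptic cover; its ramification points are the Weierstrass points $W_i$, where $\sigma_{W_i}$ has divisor $2(\Theta_1-[W_i])$, so $x(W_i)$ is a node of $\sK_C^*$, and conversely $[\sigma_P]$ is a node of $\sK_C^*$ iff its divisor is a square iff $[P]=[\iota P]$. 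Hence the six nodes of $\sK_C^*$ on $L$ are exactly $x(W_1),\dots,x(W_6)$.

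For the intersection formula, take $D=P+Q$ effective with $u:=[D-\kappa]\notin J[2]$, so $\bar D:=[s_u]$ is a smooth point of $\sK_C^*$. By reflexivity of the dual in characteristic $\ne2$ (or by specialising from characteristic $0$), the Gauss maps of $\sK_C$ and $\sK_C^*$ are mutually inverse, so the embedded tangent plane $T_D$ to $\sK_C^*$ at $\bar D$ is, as a hyperplane of $\PP(V)$, the one dual to $\phi(u)\in\PP(V^*)$, i.e.\ $T_D=\{[s]\in\PP(V):s(u)=0\}$. Then $x^{-1}(T_D)=\{P'\in C:\sigma_{P'}(u)=0\}=\{P':u\in\Theta_1-[P']\text{ or }u\in\Theta_1-[\iota P']\}$, and the condition $u\in\Theta_1-[P']$ says that the degree-one class $[P+Q+P'-\kappa]=[P+P'-\iota Q]$ is represented by a point of $C$, i.e.\ $P+P'\sim R+\iota Q$ for some $R$, which for $u\ne0$ and generic $D$ forces $P'\in\{\iota Q,\iota P\}$; similarly the other alternative forces $P'\in\{Q,P\}$. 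Hence $x^{-1}(T_D)=D+\iota D$ as divisors, and applying $x_*$ together with $x_*(\iota D)=x_*(D)$ and $x_*x^{*}=2$ gives $2x_*(D)=2(T_D\cdot L)$, so $x_*(D)=T_D\cdot L$. The $D$ with $u\in J[2]$ (necessarily $D\sim\kappa$ or $D=W_i+W_j$) lie outside the hypothesis, and coincidences among $P,Q,\iota P,\iota Q$ are handled by continuity of intersection multiplicities.

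The step I expect to be most delicate is installing the two structural inputs cleanly: the identification $\sK_C^*=\{[s_v]\}$ with the ensuing biduality description of $T_D$ (where one must watch the sixteen nodes and, in positive characteristic, reflexivity), and the rationality of $L$, where the obvious construction of $x$ is only defined over $\bar k$ and one must instead route through the $k$-rational theta divisor in $\Pic^1(C)$. Both can be cross-checked against explicit Flynn-type coordinates for $\sK_C$ and $\sK_C^*$.
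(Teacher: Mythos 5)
Your proof is correct in outline, but it takes a genuinely different route from the paper's. The paper argues by direct computation in the explicit coordinates already set up in Section~\ref{S:KummerWeddle}: it passes to an extension where $D=(x_1:y_1:z_1)+(x_2:y_2:z_2)$ is supported on rational points, notes that the images on $L$ under the Veronese map $(x:z)\mapsto(z^2:-xz:x^2:0)$ span the line $\eta_4=\xi_1\eta_1+\xi_2\eta_2+\xi_3\eta_3=0$ with $(\xi_1:\xi_2:\xi_3)=(z_1z_2:x_1z_2+x_2z_1:x_1x_2)$, and recognizes these as the leading Cassels--Flynn coordinates of $[D-\kappa]$ on $\sK_C$, so that the explicit $\PP^2$--$(\PP^2)^*$ duality identifies this line as $T_D\cap\{\eta_4=0\}$; the non-separated case is checked separately as a tangency. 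You instead work intrinsically on $J$ with sections of $\mathcal{O}(2\Theta)$, identify $\sK_C^*$ as $\{[s_v]\}$, realize $x\colon C\to L$ via the $k$-rational theta divisor $\Theta_1\subset\Pic^1$, and reduce the intersection formula to the divisor-theoretic computation of $\{P':\sigma_{P'}(u)=0\}$. Your approach buys conceptual clarity (it explains \emph{why} the formula holds and handles the rationality of $L$ without exhibiting a point), at the cost of two structural inputs the coordinate computation sidesteps: the reflexivity of $\sK_C$ in positive characteristic (which you flag, and which also underlies the identification $T_D=\{s:s(u)=0\}$), and the ``two branches'' identification of tangent planes, which as stated needs $0\in\Theta$, i.e.\ a symmetric theta divisor attached to an \emph{odd} theta characteristic --- harmless over $\bar k$ but worth saying. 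The paper's computation is shorter given that the quadrics \eqref{eq:standardquadsinP3} and the Veronese model of $L$ are already on the table, and it is what the subsequent explicit marking of $3$-torsion actually consumes. Your treatment of coincidences among $P,Q,\iota P,\iota Q$ by ``continuity of intersection multiplicities'' is acceptable but no more rigorous than the paper's one-line dismissal of the non-separated case.
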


With this result it is straightforward, given a $j$-plane, to get a representing divisor and check that $3(D-\kappa)$ is a principal divisor. If $x_*(D)$ is defined by a quadratic equation $H(x)=0$ on $\PP^1$, then a certificate of this principality is given by the existence of $\lambda$ and a cubic $G(x)$ such that $y^2+G(x)y=\lambda H(x)^3$ is a model of the curve. If $2$ is invertible, then this is equivalent to a model of the form $y^2=G(x)^2+4\lambda H(x)^3$.
For the $j$-plane $J_1\colon y_0=y_1=0$ we write $y^2+G_1(x)y=\lambda_1H_1(x)^3$ for the model thus obtained.

For the order $3$ subgroups of the form $\mu_3$ (which has a different Galois structure than $\ZZ/3$ if the base field does not contain the cube roots of unity) we find a twisted model $y^2+G(x)y+G(x)^2=-3\lambda H(x)^3$ (which is isomorphic to $-3y^2=G(x)^2+4\lambda H(x)^3$ if $2$ is invertible). 

Baker and Hunt also remark that the cubic polar $P^{(1)}_\alpha(B)$ of $B$ at $\alpha$ describes a Hesse pencil on each $j$-plane. This associates a cubic curve $E_{J,\alpha}$ to $\alpha$. In fact, these curves arise as subcovers of the unramified Abelian cubic cover of $C_\alpha$ determined by the order $3$ subgroup marked by $J$. Conversely, it means we can recover $C_\alpha$ (up to quadratic twist) from the discriminant of a cubic genus $1$ cover of $\PP^1$. In particular, we show the following in Section~\ref{S:disc}; see Remark~\ref{R:cubic_cover_coordinate_free} for a coordinate-free description.

\begin{prop}\label{P:disc}
Let $\alpha=(\alpha_0:\cdots:\alpha_4)$ be a point on the Burkhardt quartic $B$. Then the intersection of the cubic polar $P_\alpha^{(1)}$ with the $j$-plane $J\colon y_0=y_1=0$ yields the plane cubic
\[E_{J,\alpha}\colon \alpha_0(y_2^3+y_3^3+y_4^3)+3\alpha_1y_2y_3y_4=0.\]
The cover $E_{J,\alpha}\to\PP^1$ obtained from projecting from $(y_2:y_3:y_4)=(\alpha_2:\alpha_3:\alpha_4)$ is equivalent to
\[w^3+3\lambda_1H_1(x,z)w+\lambda_1G_1(x,z)=0.\]
The curve $C_\alpha$ (up to quadratic twist) arises as the discriminant of $E_{J,\alpha}\to\PP^1$, and the fiber product $C_\alpha\times_{\PP^1} E_{J,\alpha}$ is the unramified cover of $C_\alpha$ that capitalizes the order $3$ subgroup of $\sJ_\alpha$ determined by $J$.
\end{prop}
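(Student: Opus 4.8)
The plan is to verify the four assertions in turn; only the last carries real geometric content. For the first I would evaluate the polar $P^{(1)}_\alpha\colon\sum_i\alpha_i\,\partial_{y_i}f=0$ directly. Writing $f=y_0^4+y_0(y_1^3+y_2^3+y_3^3+y_4^3)+3y_1y_2y_3y_4$, every monomial occurring in the five partial derivatives is divisible by $y_0$ or by $y_1$ except the term $3y_2y_3y_4$ inside $\partial_{y_1}f$, so restriction to $J\colon y_0=y_1=0$ collapses the polar to $\alpha_0(y_2^3+y_3^3+y_4^3)+3\alpha_1y_2y_3y_4=0$. To put the projection into normal form I would parametrise the pencil of lines in $J$ through $(\alpha_2:\alpha_3:\alpha_4)$: fixing a complement to that point in $k^3$ with coordinates $(x:z)$, a point of the line labelled $(x:z)$ has the shape $w\cdot(\alpha_2,\alpha_3,\alpha_4)+v_{x,z}$ with $v_{x,z}$ linear in $(x,z)$, and substituting into $E_{J,\alpha}$ gives $c_3w^3+c_2(x,z)w^2+c_1(x,z)w+c_0(x,z)$ with $c_i$ a form of degree $i$ and $c_3=E_{J,\alpha}(\alpha_2,\alpha_3,\alpha_4)=-\alpha_0(\alpha_0^3+\alpha_1^3)$ by the Burkhardt relation, hence nonzero on the relevant locus. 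Completing the cube (legitimate since $\operatorname{char}k\neq3$) removes the $w^2$-term, leaving $w^3+p(x,z)w+q(x,z)$ with $\deg p=2$, $\deg q=3$. Matching $p$ with $3\lambda_1H_1$ and $q$ with $\lambda_1G_1$ then requires identifying this $\PP^1$ of lines with the base of the hyperelliptic cover attached to $J_1$ — the line $L\cong\PP^1$ of Proposition~\ref{P:kummer_find_intersection} — and rescaling $w$ accordingly; under that dictionary the identity is a direct, if lengthy, computation, and the degrees of $H_1$ and $G_1$ come out as required.

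The discriminant of $w^3+pw+q$ in $w$ is $\delta=-4p^3-27q^2=-27\lambda_1^2(G_1^2+4\lambda_1H_1^3)$, so its vanishing locus on $\PP^1$ is that of the sextic $G_1^2+4\lambda_1H_1^3$. Completing the square in $y^2+G_1y=\lambda_1H_1^3$ gives $(2y+G_1)^2=G_1^2+4\lambda_1H_1^3$, which exhibits the double cover $s^2=\delta$ as the quadratic twist of $C_\alpha$ by $-27\lambda_1^2\equiv-3\pmod{(k^\times)^2}$ — exactly the twist distinguishing $\mu_3$-type from $\ZZ/3$-type subgroups elsewhere in the paper — so the discriminant cover is $C_\alpha$ up to quadratic twist. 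Taking $C_\alpha$ now to be this discriminant cover $k(\PP^1)(\sqrt{\delta})$, it is the $A_3$-subcover and $E_{J,\alpha}$ a transposition-subcover of the $S_3$-Galois closure $\widehat E$ of $E_{J,\alpha}\to\PP^1$; since the degrees $2$ and $3$ are coprime, $\widehat E$ is the normalisation of $C_\alpha\times_{\PP^1}E_{J,\alpha}$ (the naive fibre product acquires a node over each of the six branch points, which the normalisation splits into an étale triple point). For generic $\alpha$ the branching of $E_{J,\alpha}\to\PP^1$ is simple — the general case following by specialisation — so over each branch point the inertia of $\widehat E\to\PP^1$ is generated by a transposition and maps isomorphically onto the inertia $S_3/A_3$ of $C_\alpha\to\PP^1$; hence $\widehat E\to C_\alpha$ is unramified, i.e. an étale $\ZZ/3$-cover.

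It remains to identify which order-$3$ subgroup of $\sJ_\alpha[3]$ the cover $\widehat E\to C_\alpha$ marks. Form the Lagrange resolvent $r=w_1+\omega w_2+\omega^2w_3$ of the cubic: a generator of $A_3$ multiplies $r$ by a cube root of unity, so over $k(\omega)$ one has $r^3\in k(C_\alpha)$ and $\widehat E=\{t^3=r^3\}$ over $C_\alpha$, whence the cover corresponds to the order-$3$ divisor class $\tfrac13\operatorname{div}_{C_\alpha}(r^3)$ (the Galois structure of this class over $k$ being governed by the same $-3$-twist). By Cardano, $r^3$ is an explicit $k(\omega)$-combination of $q=\lambda_1G_1$ and $\sqrt{\delta}$, and since $\sqrt{\delta}$ restricted to $C_\alpha$ is a constant multiple of the hyperelliptic function $2y+G_1$, one computes $r^3$ proportional to $y+G_1$ (or, for the conjugate resolvent, to $y$). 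From $y(y+G_1)=\lambda_1H_1^3$ one then reads off $\operatorname{div}_{C_\alpha}(y+G_1)=3(D-\kappa)$ with $D$ effective of degree $2$ and $x_*(D)$ cut out by $H_1=0$, and likewise $\operatorname{div}_{C_\alpha}(y)=3(D'-\kappa)$ with $D'$ the hyperelliptic conjugate of $D$, so $[D'-\kappa]=-[D-\kappa]$. Thus both resolvents produce a generator of the same order-$3$ subgroup, and by Proposition~\ref{P:kummer_find_intersection} together with the computation of Section~\ref{S:marking} this is precisely the subgroup $J_1$ marks on $\sJ_\alpha$; combining this with the fibre-product identification finishes the proof.

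The only genuinely non-formal step is this last matching — reconciling the $3$-torsion class coming out of the resolvent of $E_{J,\alpha}\to\PP^1$ with the one attached to $J$ through the Kummer-surface construction of Section~\ref{S:marking}; I would carry it out as the explicit divisor computation on $y^2+G_1y=\lambda_1H_1^3$ just sketched. A conceptual alternative would be a $\PSp_4(\FF_3)$-equivariance-and-counting argument (there are $40$ $j$-planes, $40$ order-$3$ subgroups, the group acts transitively on the former, and both assignments $J\mapsto(\text{subgroup})$ are equivariant), but making the equivariance of the resolvent construction precise is itself real work and probably no shorter. Throughout one must track the $-3$-twist: it is what makes the qualification ``up to quadratic twist'' unavoidable in the discriminant statement, and it is consistent with the $\mu_3$-versus-$\ZZ/3$ distinction among the $40$ subgroups.
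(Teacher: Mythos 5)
Your argument is correct in substance but reaches both non-trivial conclusions by a different route than the paper. For the discriminant claim, the paper substitutes $(y_2:y_3:y_4)=(\alpha_2w:\alpha_3w+x:\alpha_4w+z)$, computes the discriminant sextic directly, and identifies it with $f_\alpha$ by comparing Igusa invariants (which classify sextics only up to scaling --- this is exactly where ``up to quadratic twist'' enters); it never needs the precise normal form of the cubic. You instead front-load the work into establishing the normal form $w^3+3\lambda_1H_1w+\lambda_1G_1$ itself, after which $\delta=-27\lambda_1^2(G_1^2+4\lambda_1H_1^3)$ and the $-3$-twist are immediate. For the identification of the order-$3$ subgroup, the paper specializes $\alpha$ to a point where the eight cubics attached to $J_i,J_i'$ have pairwise distinct $j$-invariants, checks the matching there by machine, and extends by continuity; your Lagrange-resolvent computation (the resolvent cubes come out as $27\lambda_1 y$ and $-27\lambda_1(y+G_1)$, each with divisor $\pm3(D-\kappa)$ where $x_*(D)=\{H_1=0\}$) is a genuinely different and more conceptual argument, since it ties the cover directly to the decomposition $F=G_1^2+4\lambda_1H_1^3$ and hence, via Section~\ref{S:marking}, to $J_1$ without any specialization. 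The trade-off is that your route stands or falls on the deferred ``direct, if lengthy, computation'' matching the completed-cube coefficients with $\left(3\lambda_1H_1,\lambda_1G_1\right)$ under the correct identification of the two copies of $\PP^1$ --- that is precisely the second assertion of the proposition and is no lighter than the paper's machine checks --- but once it is in place, the rest of your write-up (the $S_3$-closure and inertia argument for unramifiedness, the normalization of the nodal fibre product, the observation that the two resolvents generate the same subgroup) is sound, and arguably explains more than the paper's proof does.
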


The description of $C_\alpha$ as arising from a discriminant immediately exhibits it as a cover of $\PP^1$, avoiding the parametrization constructed in Section~\ref{S:explicit_curve}. However, to ensure that the curve matches up with the moduli interpretation we do require at least some information from Proposition~\ref{P:explicit_curve}.

Our final observations are on another classical model for $B$, obtained by setting $\sigma_1=\sigma_4=0$, where $\sigma_i$ is the $i$-th elementary symmetric polynomial in $6$ variables. This gives a more symmetric quartic model $B'\subset \PP^4$, embedded in $\PP^5$. As is easily checked, $B$ and $B'$ are isomorphic over fields containing the cube roots of unity. In other cases, however, $B'$ is a nontrivial twist of $B$. For instance, for $B'$ all the $j$-planes come in conjugate pairs.

It raises the question what level-3 structure is parametrized by $B'$. Given a point $\alpha$ on $B'$ we can obtain $6$ points on a conic in exactly the same way as for $B$. However, we find that if we take $\alpha\in B'(\RR)$ then the conic has no real points. Thus the moduli space parametrizes Kummer surfaces with level-$3$ structure that are \emph{not} a quotient of an Abelian variety defined over $\RR$.

We also note that the parametrization idea of Baker cannot be adapted to $B'$ over $\QQ$, so as far as we know it is still unknown if $B'$ is rational over $\QQ$. Indeed, the birational parametrization of $B$ does not arise from a construction that is particularly compatible with the modular interpretation of $B$ (see Remark~\ref{R:parmsym}). There are many twists of $B$, corresponding to the various full level-$3$ structures that can arise on Abelian surfaces. 

\begin{question} Which twists of $B$ are rational over $\QQ$?
\end{question}

\section{Some basic properties of the Burkhardt quartic}

The action of $\PSp_4(\FF_3)$ on $B$ is given by the right action on the row vector $(y_0,\ldots,y_4)$ by the matrices
\[
-\begin{pmatrix}
	1 & 0 & 0 & 0 & 0 \\
	0 & 1 & 0 & 0 & 0 \\
	0 & 0 & 0 & 0 & 1 \\
	0 & 0 & 0 & 1 & 0 \\
	0 & 0 & 1 & 0 & 0
\end{pmatrix},
\frac{1}{3}\begin{pmatrix}
	1 & 2 & 2 & 2 & 2 \\
	1 & -1 & -1 & 2 & -1 \\
	1 & -1 & -1 & -1 & 2 \\
	1 & -1 & 2 & -1 & -1 \\
	1 & 2 & -1 & -1 & -1
\end{pmatrix},
-\begin{pmatrix}
	1 & 0 & 0 & 0 & 0 \\
	0 & 1 & 0 & 0 & 0 \\
	0 & 0 & 0 & \zeta^{-1} & 0 \\
	0 & 0 & \zeta & 0 & 0 \\
	0 & 0 & 0 & 0 & 1
\end{pmatrix}.
\]
The model that Burkhardt determined originally \cite{Burkhardt1891}*{p.~208} $y_0^4+8y_0(y_1^3+y_2^3+y_3^3+y_4^3)+48y_1y_2y_3y_4$, arises as the quartic invariant for the transpose action and differs from $B$ by a scaling of $y_0$.

The first two matrices generate the subgroup $\Gamma'$ of matrices defined over $\ZZ[\frac{1}{3}]$. It is isomorphic to $\PGL_2(\FF_3)\rtimes C_2$.

We define the \emph{Hessian} of $B$ to be the projective hypersurface defined by
\[
\He(B)\colon \frac{1}{486}\det \left(\frac{\partial f}{\partial y_i\partial y_j}\right )_{i,j}=0.
\]
The scaling ensures that the resulting polynomial is defined over $\ZZ$ with content $1$. Over any field $k$ of characteristic different from $3$ and containing the cube roots of unity, $\He(B)\cap B$ consists of a union of $40$ planes. Each of these planes contain $9$ of the nodes of $B$. These planes are classically known as \emph{Jacobi-planes}, or $j$-planes.
Furthermore, there are $40$ hyperplanes, classically known as \emph{Steiner primes}, that intersect $B$ in the union of four $j$-planes. Conversely, every $j$-plane lies in four Steiner primes. Two $j$-planes that do not lie in a common Steiner prime are \emph{skew} and meet in a single point, which is a node of $B$.

Over fields not containing a primitive cube root of unity, the intersection $\He(B)\cap B$ splits in eight $j$-planes defined over $k$, four unions of two conjugate $j$-planes meeting in a line, and $12$ unions of two conjugate $j$-planes meeting in a point.

The $j$-planes defined over $k$ are
\[J_i=\{y_0=y_i=0\} \text{ for } i=1,\ldots,4\]
contained in the Steiner prime $y_0=0$ and
\[J_i'=\{y_0+\cdots+y_4=y_0+y_i=0\} \text{ for } i=1,\ldots,4\]
contained in the Steiner prime $y_0+\cdots+y_4=0$. The group $\Gamma'$ acts faithfully on the $J_i'$ and $J'_i$ by (simultaneous) permutation and interchanging $J_i$ with $J'_i$.

Given $\alpha=(\alpha_0:\cdots:\alpha_4)\in B$, one can consider the \emph{polars} (see \cite{Dolgachev12}) of $B$ at $\alpha$. These are hypersurfaces of degrees $3,2,1$ given by
\begin{equation}\label{E:polars}
\begin{aligned}
P_\alpha^{(1)}&=(4y_0^3 + y_1^3 + y_2^3 + y_3^3 + y_4^3)\alpha_0 + 
(3y_0y_1^2 + 3y_2y_3y_4)\alpha_1 + (3y_0y_2^2 +
3y_1y_3y_4)\alpha_2\\
&\quad + (3y_0y_3^2 + 
3y_1y_2y_4)\alpha_3 + (3y_0y_4^2 + 
3y_1y_2y_3)\alpha_4,\\
P_\alpha^{(2)}&=2\alpha_0^2y_0^2 + \alpha_1^2y_0y_1 + \alpha_2^2y_0y_2 + \alpha_3^2y_0y_3 + 
\alpha_4^2y_0y_4 + \alpha_0\alpha_1y_1^2 + \alpha_3\alpha_4y_1y_2 + 
\alpha_2\alpha_4y_1y_3\\
&\quad + \alpha_2\alpha_3y_1y_4 + \alpha_0\alpha_2y_2^2 + 
\alpha_1\alpha_4y_2y_3 + \alpha_1\alpha_3y_2y_4 + \alpha_0\alpha_3y_3^2 + 
\alpha_1\alpha_2y_3y_4 + \alpha_0\alpha_4y_4^2,\\
P_\alpha^{(3)}&=(4\alpha_0^3 + \alpha_1^3 + \alpha_2^3 + \alpha_3^3 + \alpha_4^3)y_0 + 
(3\alpha_0\alpha_1^2 + 3\alpha_2\alpha_3\alpha_4)y_1 + (3\alpha_0\alpha_2^2 +
3\alpha_1\alpha_3\alpha_4)y_2\\
&\quad + (3\alpha_0\alpha_3^2 + 
3\alpha_1\alpha_2\alpha_4)y_3 + (3\alpha_0\alpha_4^2 + 
3\alpha_1\alpha_2\alpha_3)y_4.
\end{aligned}
\end{equation}
One recognizes that $P^{(3)}_\alpha$ is simply the tangent space of $B$ at $\alpha$.

\section{Rational parametrization of the Burkhardt quartic}
\label{S:param}

In this section we give an explicit birational parametrization of the Burkhardt quartic over any field $k$ of characteristic different from $3$. We present our computations over $\QQ$ and observe that the formulas we obtain are defined over $\ZZ$ and maintain their desired properties when reduced modulo a prime different from $3$.

Baker \cite{Baker46} provides an explicit parametrization of $B$ over $\QQ(\zeta_3)$. 
His construction boils down to the observation that given $3$ distinct planes $J_1,J_2,J_3\subset \PP^4$, the variety $L_{J_1,J_2,J_3}$ of lines incident with all of these planes is generally rational of dimension $3$. Furthermore, since $B$ is a hypersurface of degree $4$, a line in $\PP^4$ generally intersects $B$ in $4$ points. If we choose $J_1,J_2,J_3\subset B$, then a line $l\in L_{J_1,J_2,J_3}$ has $3$ of its intersection points with $B$ prescribed by its intersections with $J_1,J_2,J_3$. We obtain a rational map $L_{J_1,J_2,J_3}\dashrightarrow B$ by sending a line to the fourth point of intersection. 

This construction can degenerate in various ways. We are only interested in the component of $L_{J_1,J_2,J_3}$ that parametrize lines that intersect $J_1,J_2,J_3$ in distinct points, since otherwise the map to $B$ is not well-defined. This means that a necessary condition for obtaining a dominant map is that the planes are pairwise skew.

The action of $\PSp_4(\FF_3)$ splits the collection of $\binom{40}{3}$ triples of $j$-planes into $5$ orbits. Only two of these orbits consist of pairwise skew triples and only one of them yields a dominant map. For completeness, we describe all $5$ orbits.
\begin{itemize}
\item $4\cdot40$ triples consisting of planes lying in a single Steiner prime $P$. Any pair of these planes meet in a line. 
\item $2160$ triples consisting of one skew pair, with a third $j$-plane meeting each of the first two in a line.
\item $4320$ triples consisting of a pair of $j$-planes that meet in a line together with a third $j$-plane that is skew to each of the others. 
\item $2\cdot4\cdot 45$ triples of planes that are pairwise skew, but all meet at the same node. 
\item $2880$ triples consisting of mutually skew $j$-planes, pairwise meeting in distinct nodes.
\end{itemize}
The orbit of length $360$ is interesting in its degeneracy.
This configuration arises from the fact that each of the $45$ nodes has $8$ $j$-planes through it, split in two quadruples of pairwise skew planes. Computation shows that any line through $3$ planes in such a quadruple also goes through the fourth. Hence, the resulting map $L_{J_1,J_2,J_3}\dashrightarrow B$ is not dominant.

Baker produces an explicit parametrization, but starts from a configuration that is only defined over $\QQ(\zeta_3)$, not over $\QQ$. Indeed, it is straightforward to check that there is no triple of pairwise skew planes with each plane defined over $\QQ$. We can take two conjugate $j$-planes that are skew and take a third $j$-plane over $\QQ$ that is also skew as follows:
\[
\begin{aligned}
J_1\colon&y_0+\zeta y_1=y_2+\zeta y_3+\zeta y_4=0,\\
J_2\colon&\zeta y_0+ y_1=\zeta y_2+y_3+y_4=0,\\
J_3\colon&y_0=y_3=0.
\end{aligned}
\]
\begin{remark}
Representatives of the other orbits are also straightforward to give: the triple $z_0=z_1=0,z_0=z_2=0,z_0=z_3=0$ represents the orbit of length 160, the triple $J_1,J_2,z_0=z_1=0$ represents the orbit of length 2160, and the triple $J_1,J_2,z_0=z_2=0$ represents the orbit of length 360.
The orbit of length 4320 is represented by the triple $z_0=z_2=0,z_0=z_3=0,z_0+z_1=z_2+z_3+z_4=0$.

In particular, we see that every orbit can be represented by a Galois-stable triple.
\end{remark}

We parametrize an affine patch of $L_{J_1,J_2,J_3}$ by taking, given a point $(t_1,t_2,t_3)$, the line through
\[\begin{aligned}
P=P(t_1,t_2,t_3)&=(1:0:t_2:t_3-t_1:-t_3),\\
Q=Q(t_1,t_2,t_3)&=(0:1:t_1:0:t_1+t_2).\\
\end{aligned}\]
It is clear that $Q$ lies on $J_3$ and that $\zeta P-Q$ and $P-\zeta Q$ lie on $J_1,J_2$ respectively. The fourth linear combination of $P,Q$ that lies on $B$ yields a point $(y_0:y_1:y_2:y_3:y_4)\in B(k(t_1,t_2,t_3))$ and we obtain the following.
\begin{theorem}\label{T:ratpar_full}
Let $k$ be a field of characteristic different from $3$.
The map $\phi\colon \PP^3\dashrightarrow B$ given by the affine chart $(1:t_1:t_2:t_3)\mapsto(\ty_0:\ty_1:\ty_2:\ty_3:\ty_4)$ with
\[
\begin{aligned}
\ty_0&=    t_1^3 - 3t_1^2t_3 - 3t_1t_2^2 - 3t_1t_2t_3 - t_2^3 - 1,\\
\ty_1&=    -t_1^3 + 3t_1^2t_3 - 3t_1t_3^2 + t_2^3 + 1,\\
\ty_2&=    -t_1^4 + t_1^3t_2 + 3t_1^3t_3 - 3t_1^2t_2t_3 - 3t_1^2t_3^2 - 2t_1t_2^3 - 3t_1t_2^2t_3 + t_1 - t_2^4 - t_2,\\
\ty_3&=    -t_1^4 + 4t_1^3t_3 + 3t_1^2t_2^2 + 3t_1^2t_2t_3 - 3t_1^2t_3^2 + t_1t_2^3 - 3t_1t_2^2t_3 - 3t_1t_2t_3^2 + t_1 - t_2^3t_3 - t_3,\\
\ty_4&=    -t_1^4 - t_1^3t_2 + 2t_1^3t_3 + 3t_1^2t_2t_3 + t_1t_2^3 + 3t_1t_2^2t_3 + t_1 + t_2^4 + t_2^3t_3 + t_2 + t_3
\end{aligned}
\]
has birational inverse $\psi\colon B\dashrightarrow\PP^3$ as given in Theorem~\ref{T:ratpar}.
\end{theorem}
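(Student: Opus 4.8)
The plan is to verify directly that $\psi\circ\phi$ is the identity on $\PP^3$ and that both maps land in the claimed varieties, which suffices to establish birationality since $\phi$ is given by an explicit rational parametrization from an open subset of $\PP^3$.

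\textbf{Step 1: the point lies on $B$.} First I would confirm that for generic $(t_1:t_2:t_3)$ the point $(\ty_0:\cdots:\ty_4)$ actually lies on $B$. By construction $\ty = aP + bQ$ where $P,Q$ are as above, and the four values of $(a:b)\in\PP^1$ for which $aP+bQ\in B$ are the roots of the quartic $f(aP+bQ)$ in $(a:b)$. The construction forces three of these roots to be $(a:b)=(1:-\zeta), (\zeta:-1), (0:1)$ (since $\zeta P - Q\in J_1$, $P-\zeta Q\in J_2$, $Q\in J_3$, and $J_1,J_2,J_3\subset B$), so the fourth root is determined by Vieta, and one checks the stated $\ty$ realizes it. The minor subtlety here is that $\zeta\notin\QQ$, yet the fourth linear combination should be rational: this works because $\{1:-\zeta, \zeta:-1\}$ is a Galois-stable pair, so the product of the corresponding linear factors in $(a:b)$ is defined over $\QQ$, and dividing the quartic $f(aP+bQ)$ by $b\cdot(a^2 + ab + b^2)$ (the latter being $a^2+ab+b^2 = (a+\zeta b)(a+\zeta^2 b)$ up to the relevant normalization) leaves a linear factor over $\QQ$ whose root gives the ratio $a:b$ defining $\ty$.

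\textbf{Step 2: $\psi\circ\phi = \mathrm{id}$.} Next I would substitute the formulas for $\ty_0,\ldots,\ty_4$ from Theorem~\ref{T:ratpar_full} into the four expressions $t_0,t_1,t_2,t_3$ defining $\psi$ in Theorem~\ref{T:ratpar}, and check that the resulting $4$-tuple is proportional to $(1:t_1:t_2:t_3)$ as a point of $\PP^3$ — i.e. that $t_1\cdot(\text{first})=t_0\cdot 1$ -- wait, rather that $t_1/t_0, t_2/t_0, t_3/t_0$ reduce to $t_1,t_2,t_3$ after using the relation $f(\ty)=0$ if needed. This is a finite polynomial identity (in practice a computer-algebra verification, and the paper's appendix contains the formulae). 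I would note that $t_0$ evaluated at $\ty$ is a nonzero polynomial in $t_1,t_2,t_3$, so $\phi$ maps into the locus where $\psi$ is defined; combined with the computation this shows $\psi\circ\phi$ is the identity on a dense open set. Since $\psi$ is dominant onto $\PP^3$ (having a rational section $\phi$) and both are rational maps between irreducible varieties of the same dimension $3$, it follows that $\phi$ is dominant onto $B$ and $\psi$ is its birational inverse.

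\textbf{Step 3: descent and reduction remarks.} Finally, since every coefficient appearing in $\phi$ and $\psi$ is an integer, the same polynomial identities hold over $\ZZ$, hence over any field where they remain nontrivial; the only place $3$ enters is in the normalization $\tfrac13$ of the second generator of $\PSp_4(\FF_3)$ and, more to the point, in checking that $t_0(\ty)$ and the relevant resultants are not identically zero modulo $p$ — a quick inspection shows the leading behavior survives reduction at every $p\neq 3$, so $\phi,\psi$ remain mutually inverse birational maps over $k$ for $\mathrm{char}(k)\neq 3$. The main obstacle is purely bookkeeping: Step 2 is a sizeable but mechanical polynomial computation, and the one genuinely nontrivial point is Step 1's descent argument explaining why the ``fourth intersection point,'' extracted from a quartic whose three known roots involve $\zeta$, nonetheless has coordinates in $\QQ(t_1,t_2,t_3)$.
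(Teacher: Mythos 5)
Your proposal is correct and takes essentially the same route as the paper: verify (by a mechanical polynomial computation, doable over $\ZZ[1/3]$) that $\psi\circ\phi$ is the identity on a dense open set, note that the image of $\phi$ is therefore $3$-dimensional and by construction contained in the irreducible threefold $B$, and conclude that $\phi$ is dominant and birational with inverse $\psi$. Your Step 1 elaboration on why the fourth intersection point is rational (the Galois-stable pair of roots and Vieta) is just an unpacking of the construction the paper gives immediately before the theorem, and the paper additionally records how $\psi$ was \emph{found} (a Gr\"obner-basis elimination), which is discovery rather than proof.
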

\begin{proof}
It is straightforward to check that $\psi\circ\phi$ defines the identity map on an open part. Indeed, we can check this over $\ZZ[\frac{1}{3}]$. This implies that the image of $\phi$ must be $3$-dimensional. By construction, the image of $\phi$ is contained in $B$. Irreducibility of $B$ completes the proof.

The determination of the expressions for $\psi$ is not quite as straightforward. We construct the affine ideal: $(\ty_0 y_i-\ty_i: i = 1,\ldots,4)+(1+y_1^3+y_2^3+y_3^3+y_4^3+3y_1y_2y_3y_4))$ and compute a Gr\"obner basis with respect to an elimination order for $t_1,t_2,t_3$. We then select the basis elements in which the $t_i$ occurs linearly and solve $t_1,t_2,t_3$ from these as rational expressions in $y_1,\ldots,y_4$. This procedure is implemented as \texttt{IsInvertible} by the first author in Magma \cite{magma}.
\end{proof}

\begin{remark}\label{R:parmsym}
As is well known, Baker's parametrization, and hence also the one presented here, is not particularly compatible with the symmetries of $B$. In fact, just a cyclic subgroup of order $9$ pulls back to linear transformations on $\PP^3$. One can determine this by, for instance, determining the $j$-planes that are birational to planes under $\phi,\psi$ (there are $13$) and taking the transformations on $B$ that stabilize this collection. This way we obtain the subgroup generated by the matrix
\[\frac{1}{3}\begin{pmatrix}
-2\zeta - 1 & 2\zeta + 4 & 0 & 0 & 0 \\
\zeta + 2 & -\zeta + 1 & 0 & 0 & 0 \\
0 & 0 & \zeta + 2 & \zeta + 2 & -2\zeta - 1 \\
0 & 0 & \zeta + 2 & \zeta - 1 & \zeta - 1 \\
0 & 0 & -2\zeta - 1 & \zeta - 1 & -2\zeta - 1
\end{pmatrix},\]
inducing the transformation $(t_0:t_1:t_2:t_3)\mapsto
(-3t_0:
(\zeta - 1)t_2 + (-\zeta - 2)t_3:
3\zeta t_1 + (\zeta + 2)t_2 + (-\zeta + 1)t_3:
(-3\zeta - 3)t_1 + (-2\zeta - 1)t_2 + (-\zeta - 2)t_3)$.
\end{remark}

We now proceed with determining the \emph{base locus} of each of the maps $\phi$ and $\psi$. This is the smallest locus of the domain such that the map can be extended to a morphism on the complement.

The base locus of the map $\phi$ has a particular geometric configuration, as described in detail by Finkelnberg \cite{Finkelnberg89}. Over $\QQ(\zeta_3)$ it consists of $9$ lines $l_1,\ldots,l_9$ with $l_i$ meeting $l_{i+1}$ in a point $p_i$, and $l_9$ meeting $l_1$ in $p_9$. The points $\{p_1,p_4,p_7\}$, $\{p_2,p_5,p_8\}$ and $\{p_3,p_6,p_9\}$  define lines that intersect in a common point $p_{10}$ and $l_1\cap l_4\cap l_7$, $l_2\cap l_5\cap l_8$, $l_3\cap l_6\cap l_9$ define a further $3$ points. Finkelnberg proves that any two such configurations in $\PP^3$ are projectively equivalent, and that such a configuration defines the linear system on $\PP^3$ that gives $\phi$. Indeed, an alternative construction of $\phi$ over $\QQ$ is to construct a $\Gal(\Qbar/\QQ)$-invariant configuration like this in $\PP^3$ and prove that the image is isomorphic to $B$ (see \cite{Nasserden16}). 

The map $\psi$ can be defined on a larger part than what is given in Theorem~\ref{T:ratpar}. We compute alternative representations of the map using the following procedure. For a general rational map $\phi\colon X\to Y$ between affine varieties we proceed in the following way. We construct the graph ideal
\[\Gamma=(b_i(x) y_i - a_i(x): i=1,\ldots n)+I(X)+I(Y).\]
We saturate this ideal with respect to $(\prod_{i=1}^n b_i(x))$ and look at the Gr\"obner basis of the resulting ideal with respect to an elimination order on the $y_i$.
We can then select the basis elements in which the $y_i$ appear linearly, and use those relations to find alternative expressions for $y_i$ as rational functions in the $x_j$. For projective varieties, we patch together the affine descriptions.
This procedure is implemented as \texttt{Extend} by the first author in Magma \cite{magma}.

We can apply it to $\psi$ to find, among others, extra representations $(t_0^{(i)}:\cdots:t_3^{(i)})$ with $i=2,3,4$, as given in Appendix~\ref{Appendix} and \cite{BruinNasserden17e},
which together prove that the base locus of $\psi$ is supported on $24$ of the nodes of $B$ (4 defined over $\QQ$ and $10$ quadratic conjugate pairs).

With these explicit descriptions of the birational maps $\phi$ and $\psi$ we can also compute explicit closed subsets $J_\phi$ and $J_\psi$ such that $\phi$ restricts to an isomorphism $\PP^3\setminus J_\phi\to B\setminus J_\psi$. We take them to be the loci where our representations for $\phi$ and $\psi$ are not smooth. We define
\[J_\phi\colon \rk \left(\frac{\partial \xi_i}{\partial t_j}\right)_{i,j}<4,\]
i.e, as the locus of vanishing of the $4\times 4$ minors. We also define 
\[
J_\psi\colon \det\left(
\frac{\partial t^{(i)}_0}{\partial y_j}\;\frac{\partial t^{(i)}_2}{\partial y_j}\;\frac{\partial t^{(i)}_3}{\partial y_j}\;\frac{\partial t^{(i)}_4}{\partial y_j}\;\frac{\partial f}{\partial y_j}\right)_j=0 \text{ for }i=1,2,3,4.
\]
Note that in the latter case we take the locus where \emph{none} of the representatives are smooth.

\begin{remark}\label{R:Jcomponents}
For future reference we record the structure of $J_\phi$ and $J_\psi$.

We can decompose each into irreducible components. We find that $J_\phi$ consists of $3$ plane conics and $18$ lines, all defined over $\QQ(\zeta_3)$.  Two conics are conjugate over $\QQ$ and meet in $3$ points, $4$ pairs of skew lines are conjugate and $5$ pairs of lines meet in a point. The remaining one conic and three lines are defined over $\QQ$.

Decomposition of $J_\psi$ shows that it consists of $15$ $j$-planes defined over $\QQ(\zeta_3)$.
Five pairs of planes are conjugate meeting in a point, one pair meets in a line and one plane is defined over $\QQ$.

We can of course also compute how the components intersect, and we will use this information in Section~\ref{S:zeta}. The intersection data is too voluminous to reproduce here, however.
\end{remark}

\begin{lemma}\label{L:Bisom}
The birational map $\phi$ defined above restricts to an isomorphism
\[\PP^3\setminus J_\phi \to B\setminus J_\psi.\]
\end{lemma}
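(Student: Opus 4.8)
The plan is to exploit the fact that $\phi$ and $\psi$ are explicit, mutually inverse birational maps, together with the explicitly computed loci $J_\phi$ and $J_\psi$, and to reduce the statement to a short list of ideal-membership checks that one performs once and for all over $\ZZ[1/3]$, so that they remain valid after reduction modulo any prime different from $3$.

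First I would make precise what the construction already provides. Each of $\phi$ and $\psi$ is presented by a finite family of representatives (those displayed in Theorems~\ref{T:ratpar} and~\ref{T:ratpar_full}, together with the additional ones produced by \texttt{Extend} and collected in Appendix~\ref{Appendix}), and these representatives agree on overlaps. By definition $J_\phi$ is the locus where no representative of $\phi$ is a smooth morphism; hence on $\PP^3\setminus J_\phi$ some representative is defined and smooth, so $\phi$ restricts to a morphism $\phi_0\colon\PP^3\setminus J_\phi\to B$, and symmetrically $\psi$ restricts to a morphism $\psi_0\colon B\setminus J_\psi\to\PP^3$.

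Next I would verify the two inclusions $\phi_0(\PP^3\setminus J_\phi)\subseteq B\setminus J_\psi$ and $\psi_0(B\setminus J_\psi)\subseteq\PP^3\setminus J_\phi$; equivalently, that $\phi^{-1}(J_\psi)\subseteq J_\phi$ and $\psi^{-1}(J_\phi)\subseteq J_\psi$. Concretely this means pulling back the defining ideal of $J_\psi$ along each representative of $\phi$ and checking that it lands in the radical of the ideal of $J_\phi$, and then the symmetric computation. Once both inclusions are in hand, the composites $\psi_0\circ\phi_0$ and $\phi_0\circ\psi_0$ are genuine morphisms from $\PP^3\setminus J_\phi$ and from $B\setminus J_\psi$ to themselves. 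Each agrees with the identity on a dense open subset --- this is exactly the birational identity $\psi\circ\phi=\mathrm{id}$ of Theorem~\ref{T:ratpar_full}, already checked over $\ZZ[1/3]$ --- and since $\PP^3\setminus J_\phi$ and $B\setminus J_\psi$ are reduced and separated, a morphism agreeing with the identity on a dense open subset is the identity. Therefore $\phi_0$ and $\psi_0$ are mutually inverse, so $\phi_0$ is an isomorphism onto its image; since $\psi_0$ is defined on all of $B\setminus J_\psi$ with $\phi_0\circ\psi_0=\mathrm{id}$, that image is precisely $B\setminus J_\psi$, which is what we want.

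The hard part here is not conceptual but organizational: one must be certain that $J_\phi$ and $J_\psi$, as actually computed, are large enough to absorb the base loci and all degeneracy, so that the handful of representatives really do cover the two complements by smooth charts and the inclusion checks go through; and one must carry out the underlying Gr\"obner basis computations over $\ZZ[1/3]$ so that specialization to $\FF_q$ is harmless. A convenient sanity check: since $\PP^3\setminus J_\phi$ is smooth, $B\setminus J_\psi$ must be smooth too, which forces $J_\psi$ to contain all $45$ nodes of $B$; it is worth confirming directly that the $15$ $j$-planes making up $J_\psi$ indeed sweep out every node.
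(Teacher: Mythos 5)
Your overall strategy is the same as the paper's: both proofs take $J_\phi$ and $J_\psi$ to be the non-smooth loci of the explicit representatives, reduce the lemma to computational checks that these loci correspond to one another under $\phi$ and $\psi$, and then invoke the already-verified birational identity $\psi\circ\phi=\mathrm{id}$; your packaging via the two preimage containments $\phi^{-1}(J_\psi)\subseteq J_\phi$ and $\psi^{-1}(J_\phi)\subseteq J_\psi$ is if anything a cleaner way to organize the deduction than the paper's appeal to minimality (the paper instead checks that the components of each locus map forward into the base locus of the other map). One concrete point to fix: your ideal-membership recipe is inverted. To certify $\phi^{-1}(J_\psi)\subseteq J_\phi$ you need $I(J_\phi)\subseteq\sqrt{\phi^{*}I(J_\psi)}$ (after saturating away the base locus), not $\phi^{*}I(J_\psi)\subseteq\sqrt{I(J_\phi)}$; the latter certifies the reverse containment $J_\phi\subseteq\phi^{-1}(J_\psi)$, which also happens to hold here, so the inverted check would ``pass'' while proving nothing of what you need. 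With that direction corrected, the argument goes through.
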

\begin{proof}
It is certainly the case that $\phi$ induces an isomorphism between $\PP^3\setminus J_\phi$ and its image in $B$. Similarly $\psi$ induces an isomorphism between $B\setminus J_\psi$ and its image in $\PP^3$.

We can check by direct computation that all the components of $J_\phi$ are either part of the base locus of $\phi$ or map into the base locus of $\psi$. In fact, the whole candidate base locus of $\psi$ gets hit, so we verify in the process that we really have found the base locus of $\psi$.

Similarly, all the components of $J_\psi$ map into the base locus of $\phi$.
It follows that $J_\phi$ and $J_\psi$ are \emph{minimal} so that $\PP^3\setminus J_\phi$ and $B\setminus J_\psi$ are isomorphic to their images under $\phi$ and $\psi$ respectively. It follows they must be the images of each other.
\end{proof}

\section{The zeta function over $\FF_q$}
\label{S:zeta}

\begin{dfn} Let $X$ be an algebraic variety, not necessarily closed, defined over the finite field $\FF_q$. The \emph{zeta function} of $X$ is the formal power series
	\[Z(X/\FF_q,T)=\exp \left(\sum_{n=1}^\infty \#X(\FF_{q^n})\frac{T^n}{n}\right)\]
\end{dfn}
Standard properties of zeta functions include
\begin{lemma}\label{L:zetacalc}\leavevmode
	\begin{enumerate}
	\item $\displaystyle Z(\PP^n/\FF_q,T)=\frac{1}{(1-T)\cdots(1-q^nT)}.$
	
	\item Let $X,Y$ be algebraic varieties over $\FF_q$. Then
	\[Z(X\cup Y/\FF_q,T)Z(X\cap Y/\FF_q,T)=Z(X/\FF_q,T)Z(Y/\FF_q,T).\]
	
	\item Suppose that over $\FF_{q^2}$ we have $X=Y\cup Y'$, where $Y,Y'$ are disjoint and conjugate over $\FF_q$. Then
	\[Z(X/\FF_q,T)=Z(Y/\FF_{q^2},T^2).\]
	
	\end{enumerate}
\end{lemma}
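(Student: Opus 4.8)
The plan is to deduce each part directly from the defining exponential series, using in each case only the point-count identity that the geometric statement encodes, together with the formal power-series identity $\sum_{m\ge1}u^m/m=-\log(1-u)$ and the fact that $\exp$ carries sums of formal power series to products. For (1), I would record the cell decomposition $\#\PP^n(\FF_{q^m})=1+q^m+\cdots+q^{nm}$ coming from $\PP^n=\AA^n\sqcup\AA^{n-1}\sqcup\cdots\sqcup\AA^0$, substitute it into $Z(\PP^n/\FF_q,T)$, interchange the two summations to obtain $\sum_{j=0}^n\sum_{m\ge1}(q^jT)^m/m=-\sum_{j=0}^n\log(1-q^jT)$, and exponentiate term by term, so that the sum over $j$ becomes the product $\prod_{j=0}^n(1-q^jT)^{-1}$.

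For (2), the only input is the elementary set-theoretic identity $|A\cup B|+|A\cap B|=|A|+|B|$, applied with $A=X(\FF_{q^m})$ and $B=Y(\FF_{q^m})$, which are subsets of the $\FF_{q^m}$-points of any ambient variety containing both $X$ and $Y$; this holds for every $m$. Multiplying by $T^m/m$, summing over $m\ge1$, and exponentiating turns this additive relation among the series $\sum_m\#(-)(\FF_{q^m})T^m/m$ into the asserted product relation $Z(X\cup Y/\FF_q,T)Z(X\cap Y/\FF_q,T)=Z(X/\FF_q,T)Z(Y/\FF_q,T)$.

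For (3), write $F$ for the $q$-power Frobenius; that $Y$ and $Y'$ are conjugate over $\FF_q$ means precisely that $F$ interchanges them, so $X=Y\sqcup Y'$ is $F$-stable, consistent with $X$ being defined over $\FF_q$. If $P\in X(\FF_{q^m})$ lies on $Y$, then $F^i(P)\in Y$ for $i$ even and $F^i(P)\in Y'$ for $i$ odd, so the condition $F^m(P)=P$ together with $Y\cap Y'=\emptyset$ forces $m$ even; hence $X(\FF_{q^m})=\emptyset$ for odd $m$, while for $m=2\ell$ the map $F$ restricts to a bijection $Y(\FF_{q^{2\ell}})\to Y'(\FF_{q^{2\ell}})$, so $\#X(\FF_{q^{2\ell}})=2\#Y(\FF_{q^{2\ell}})=2\#Y(\FF_{(q^2)^\ell})$. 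Substituting into the exponential, only even indices contribute, and reindexing $m=2\ell$ gives $Z(X/\FF_q,T)=\exp\!\big(\sum_{\ell\ge1}\#Y(\FF_{(q^2)^\ell})(T^2)^\ell/\ell\big)=Z(Y/\FF_{q^2},T^2)$. None of this poses a genuine obstacle, since everything is a formal manipulation of power series; the only point needing any care is the Frobenius-orbit bookkeeping in (3) — justifying both the vanishing of $X(\FF_{q^m})$ for odd $m$ and the factor $2$ for even $m$ — together with the attendant observation that ``conjugate over $\FF_q$'' is to be read here as ``exchanged by $\mathrm{Frob}_q$''.
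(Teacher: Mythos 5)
Your proof is correct, and it is the standard argument: the paper states this lemma without proof, simply as ``standard properties of zeta functions,'' so your write-up supplies exactly the routine verification (cell decomposition of $\PP^n$, inclusion--exclusion on $\FF_{q^m}$-points, and the Frobenius-orbit bookkeeping with the factor $2$ cancelling against $2\ell$) that the authors implicitly rely on. Nothing to correct.
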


Together with Lemma~\ref{L:Bisom} this gives that
\[Z(B/\FF_q,T)=\frac{Z(J_\psi/\FF_q,T)\,Z(\PP^3/\FF_q,T)}{Z(J_\phi/\FF_q,T)},\]
and, since $J_\phi,J_\psi$ are varieties that are unions of varieties that are isomorphic to $\PP^n$ or unions of conjugate varieties, with intersections that are also of this type, we can use Lemma~\ref{L:zetacalc} to compute the right hand side.

In order to compute $Z(J_\phi/\FF_q,T)$ and $Z(J_\psi/\FF_q,T)$ we need to do a careful inclusion-exclusion argument which is too big to do by hand: for $J_\psi$ it involves more than $200$ components. We sketch a formal description that is suitable for implementation in a computer algebra system.

Suppose $\{X_1,\ldots,X_m\}$ is a collection of algebraic varieties over $\FF_q$ that is closed under taking intersections. Define
\[M_{ij}=\begin{cases}
1&\text{ if }X_j\subseteq X_i\\
0&\text{ otherwise}
\end{cases}\]
Solve over $\ZZ$ the linear equation
\[
(e_1,\ldots,e_m)M=(1,\ldots,1).
\]
Then
\[Z(\bigcup_{i=1}^m X_i/\FF_q,T)=\prod_{i=1}^m Z(X_i/\FF_q,T)^{e_i}\]

With Remark~\ref{R:Jcomponents} we see that the observations in Lemma~\ref{L:zetacalc} allow us to compute the zeta functions of the components and their intersections, if we note that over $\FF_q$, a nonsingular conic is isomorphic to $\PP^1$ and that the zeta function of two conjugate intersecting lines can be computed as $\frac{1}{(1-T^2)(1-qT^2)}\frac{1-T^2}{1-T}$, and similarly for two conjugate planes meeting in a line or a point.

\begin{proof}[Proof of Theorem~\ref{T:zetaB}]
Combining Lemmas~\ref{L:Bisom} and \ref{L:zetacalc}, we obtain
\[Z(B/\FF_q,T)=\frac{Z(\PP^3,T)Z(J_\psi,T)}{Z(J_\phi,T)}.\]
Furthermore, for both $J_\phi$ and $J_\psi$ we have a decomposition into varieties for which Lemma~\ref{L:zetacalc}(1, 3) gives us the zeta functions. This gives us the required formula.
\end{proof}

\begin{proof}[Proof of Corollary~\ref{C:zetaBdesing}]
When we desingularize $B$ by blowing up the $45$ nodes, we replace each node $\alpha$ by the projection (from $\alpha$) of the tangent cone at $\alpha$. If $q\equiv 1\pmod 3$, then all the nodes are defined over $\FF_q$, and each get replaced by a quadric with a split system of lines, i.e., $\PP^1\times\PP^1$. For the zeta function this gives a correction factor of $(1-qT)^{-2}(1-q^2T)^{-1}$ for each.

If $q\not\equiv 1\pmod 3$ then there are $19$ pairs of conjugate nodes whose tangent cones are split over their fields of definition, $6$ nodes with a split tangent cone, and one node $(-1:1:1:1:1)$ which has a non-split tangent cone. For this the correction factor is $((1+qT)(1-qT)(1-q^2T))^{-1}$.
\end{proof}

\begin{proof}[Proof of Corollary~\ref{C:pointcount}]
We can also determine $Z(B\cap\He(B)/\FF_q,T)$ via the same procedure. Using that
\[\#X(\FF_q)=- \left.\frac{Z(X/\FF_q,T)}{\frac{d}{dT}Z(X/\FF_q,T)}\right|_{T=0}\]
we get the formulas as stated. Note that for $q\equiv 1\pmod{3}$ the given formula already follows from \cite{HoffmanWeintraub01}.
\end{proof}

\begin{remark}\label{R:zeta_small_q}
We see that for $q=4,7,13$, all rational points on $B$ lie on $j$-planes. For those $q$, there are no genus $2$ curves over $\FF_q$ with a Jacobian that has fully rational $3$ torsion. In fact, as Noam Elkies pointed out in a private conversation, for $q=16,19$, the number of rational points outside the $j$-planes is a divisor of the order of the Burkhardt group. Indeed, for those $q$, the rational points outside the $j$-planes form a single orbit, so for each there is a unique isomorphism class of genus $2$ curves with fully rational $3$-torsion. For $q=16$, this class is represented by the quadratic twist of the affine model $y^2+y=x^5$
and for $q=19$ by $y^2=x^6+8x^3+1$.
\end{remark}

\section{Models of genus $2$ curves}
\label{S:KummerWeddle}
A nonsingular curve of genus $2$ is hyperelliptic. It can be represented as a separable double cover of $\PP^1$, ramified over a degree $6$ locus. Over fields $k$ of characteristic not equal to $2$, it admits a weighted projective model
\[C\colon y^2=f(x,z)=f_0x^6+f_1x^5z+\cdots+f_6z^6,\]
where $x,y,z$ have weights $1,3,1$ respectively.
The \emph{quadratic twist} of $C$ by $\sqrt{d}$ is given by a model
\[C^{(d)}\colon y^2=d\,f(x,z).\]
It is isomorphic to $C$ over $k(\sqrt{d})$. It follows that by marking $6$ points on a $\PP^1$ one specifies a genus $2$ curve up to quadratic twists.

We write $\sJ_C$ for the \emph{Jacobian variety} of $C$, which is a principally polarized Abelian surface representing $\Pic^0$, and we write $\sK_C=\sJ_C/\langle -1\rangle$ for the associated \emph{Kummer surface}. The surface $\sK_C$ admits a quartic model in $\PP^3$, with $16$ nodal singularities (the image of $\sJ_C[2]$). It follows that $\sK_C$ comes with one marked node: the image of the origin on $\sJ_C$.

There is also a surface $\sP_C$ that represents $\Pic^1$. It is a principal homogeneous space under $\sJ_C$. There is a natural embedding $C\to\sP_C$, sending a point to its divisor class.

The projective dual of $\sK_C$, denoted by $\sK_C^*$ is also a quartic surface with $16$ nodes, in the dual space $(\PP^3)^*$. If $f(x,z)$ has a $k$-rational root then $\sK_C^*$ and $\sK_C$ are isomorphic over $k$. In general this is not the case, however. The variety $\sP_C$ defined above has an involution $\iota$ induced by the hyperelliptic involution on $C$, and $\sP_C/\langle\iota\rangle$ is isomorphic to $\sK_C^*$ (see \cite{Cassels-Flynn96}*{Ch.~4}).

The $16$ nodes on $\sK_C$ correspond to $16$ \emph{tropes} on $\sK_C^*$: these are planes that contain $6$ nodes. They intersect $\sK_C^*$ in a double-counting conic.
Since we need to distinguish here between several kinds of Kummer surfaces that geometrically are all the same, we introduce some terminology.

By a \emph{geometric Kummer surface} we mean a quartic surface in $\PP^3$ with $16$ nodal singularities. A \emph{Kummer surface} is one with a marked node over $k$. A \emph{dual Kummer surface} is a geometric Kummer surface with a marked trope over $k$.

If a dual Kummer surface indeed comes from a curve $C$ over $k$, then the conic on the marked trope is isomorphic to $\PP^1$ and the $6$ nodes on it mark $C$ up to quadratic twist. As is well-known, there is a field-of-moduli versus field-of-definition obstruction for curves of genus $2$ and dual Kummer surfaces on which the conic is not isomorphic to $\PP^1$ do exist over non-algebraically closed base fields.

The most straightforward way to show that a conic is isomorphic to $\PP^1$ is to exhibit a rational point on it. However, in our application this is a slightly unnatural criterion: the fact that a conic is isomorphic to $\PP^1$ does not mark any particular point on the conic.

There is an alternative description, exploiting a phenomenon known as the \emph{association} of point sets \cite{Coble1922}.  For us it yields that the moduli of $6$ points in $\PP^1$ and of $6$ points in $\PP^3$ are essentially equivalent (see for instance \cite{Howard-ea08}): if one maps $\PP^1$ into $\PP^3$ (with coordinates $(x_0:x_1:x_2:x_3)$ via a complete linear system of degree $3$, the $6$ points end up in general position (meaning, no $3$ in a line, no $4$ in a plane). Conversely, $6$ points in general position in $\PP^3$ determine a rational normal curve of degree $3$. They also determine a $4$-dimensional system of quadrics $Q=\langle Q_1,\ldots,Q_4\rangle$ having these $6$ points as a base locus. 
We follow a classical construction (see \cite{Baker1907}*{III.17} and \cite{Coble1929}*{III.41}; also in \cite{Cassels-Flynn96}*{Chapter~5}). If $f(x,z)=f_0x^6+f_1x^5z+\cdots+f_6z^6$, we can take the standard Veronese embedding $(x:z)\mapsto(x^3:x^2z:xz^2:z^3)$ and obtain
\begin{equation} \label{eq:standardquadsinP3}
\begin{split} Q_{1}&=x_{0}x_{2}-x_{1}^{2},\\
Q_{2}& =x_{0}x_{3}-x_{1}x_{2},\\
Q_{3}& =x_{1}x_{3}-x_{2}^{2},\\
Q_{4}&=f_0x_0^2+f_1x_0x_1+f_2x_1^2+f_3x_1x_2+f_4x_2^2+f_5x_2x_3+f_6x_3^2,\end{split}  
\end{equation}
where
\[L\colon Q_1=Q_2=Q_3=0\]
is the image of the Veronese embedding.

Such a system $Q$ gives rise to $2$ birational quartic surfaces (see \cite{Cassels-Flynn96}*{Ch.~5}). First we consider the locus of points in $\PP^3$ that occur as singularities of singular members of $Q$:
\[\sW_Q\colon \det\left(\frac{\partial Q_i}{\partial x_j}\right)_{i,j}=0,\]
which is classically known as a \emph{Weddle surface}.
The singular members themselves are given by
\[\sK_Q^*\colon \det(\eta_1Q_1+\eta_2Q_2+\eta_3Q_3+\eta_4Q_4)=0,\]
which is classically known as the \emph{symmetroid} of $\sW_Q$. The birational map $\sW_Q\dashrightarrow \sK_Q^*$ is induced by the relation, given $P\in\sW_\sQ$:
\[\left.\frac{\partial}{\partial x_j}(\eta_1Q_1+\cdots +\eta_4Q_4)\right|_P=0\text{ for }j=0,1,2,3\]

It is classical that $\sK_Q^*$ is a geometric Kummer surface, and that $\eta_4=0$ is a trope, so it is a \emph{dual} Kummer surface. We write $(\PP^3)^*$ for the ambient space and write $\PP^3$ for its dual, with coordinates $(\xi_1:\xi_2:\xi_3:\xi_4)$ dual to $(\eta_1:\eta_2:\eta_3:\eta_4)$. Then the dual of $\sK_Q^*$ is exactly the model of $\sK_C$ as given in \cite{Cassels-Flynn96}*{Ch.~5.5}, so we have $\sK_Q^*=\sK_C^*$. 

The composition of $\PP^1\to\sW_Q\to \sK^*_Q$ is given by $(x:z)\mapsto(z^2:-xz:x^2:0)$ and the image is $\eta_4=(\eta_1\eta_3-\eta_2^2)=0$. 

We write $(\PP^2)^*\subset (\PP^3)^*$ for the plane $\eta_4=0$. Under duality this corresponds to the projection $\PP^3\to\PP^2$ given by $(\xi_1:\cdots\xi_4)\mapsto(\xi_1:\xi_2:\xi_3)$, giving a natural duality between $\PP^2$ and $(\PP^2)^*$: A point $(\xi_1:\cdots:\xi_4)\in\PP^3$ determines a plane $\xi_1\eta_1+\cdots+\xi_4\eta_4=0$ in $(\PP^3)^*$, which when intersected with $\eta_4=0$ determines a line  $\xi_1\eta_1+\cdots+\xi_3\eta_3=0$ in $(\PP^2)^*$.

This explicit duality gives us a coordinate-free way to express for a point $D\in \sJ_C$ the relation between the image on $\sK_C$ and the support on $\PP^1$ of a representing effective divisor of degree $2$.

\begin{proof}[Proof of Proposition~\ref{P:kummer_find_intersection}]
We can check this over a field extension where $D$ is supported on degree $1$ points. First suppose $x_*(D)$ is separated.
Let $D=(x_1:y_1:z_1)+(x_2:y_2:z_2)$. Then the image on $L$ is $(z_1^2:-x_1z_1:z_1^2:0)+(z_2^2:-x_2z_2:z_2^2:0)$. The line through these points is described by $\eta_4=\xi_1\eta_1+\xi_2\eta_2+\xi_3\eta_3=0$, where $(\xi_1:\xi_2:\xi_3)=(z_1z_2:x_1z_2+x_2z_1:x_1x_2)$. 
But with the coordinates for $\sK_C$ used in \cite{Cassels-Flynn96} that is the image of $D-\kappa_C$ under $\sK_C\to\PP^2$, so we see that the tangent plane $\xi(D)$ indeed intersects the trope $\eta_4=0$ in the line that intersects $L$ in $x_*(D)$.

If $x_*(D)$ is not separated it is straightforward to check that $\eta(D)$ is tangent to $L$.
\end{proof}

\section{Explicit moduli interpretation of the Burkhardt quartic}
\label{S:moduli}

\subsection{Level structure}

\begin{dfn}\label{D:level3structure} A \emph{full level-$3$ structure} for us will be a group scheme $\Sigma=\Sigma_{g,3}$ over $k$ that over $k^\sep$ is isomorphic to $(\ZZ/3)^{2g}$, and is equipped with a non-degenerate alternating pairing $\Sigma\times\Sigma\to\mu_3$.
An Abelian surface with full level-$3$ structure $\Sigma_{2,3}$ is a principally polarized Abelian surface $A$ with an embedding $\Sigma_{2,3}\to A$ such that the pairing on $\Sigma_{2,3}$ is compatible with the Weil pairing on $A[3]$.
\end{dfn}

One full level-$3$ structure is $\Sigma=(\ZZ/3)^2\times (\mu_3)^2$, where the pairing comes from considering $(\mu_3)^2$ as the Cartier dual of $(\ZZ/3)^2$.
It is known \cite{FreitagSalvati04} that the normalization of the projective dual of the Burkhardt quartic is isomorphic to the Satake compactification of the moduli space $\sA_{2,3}$ of Abelian surfaces with full level-$3$ structure $\Sigma$.

The open part $B\setminus\He(B)$ (which is nonsingular, and hence isomorphic to a part of the dual) is the part corresponding to Jacobians of smooth genus $2$ curves.
Since $\sA_{2,3}$ is a fine moduli space,
it follows that there is a \emph{universal} genus $2$ curve $C_\alpha$ over
$B\setminus\He(B)$, such that if $\alpha$ is a point on $B$ then $\sJ_\alpha=\sJ_{C_\alpha}$ is the corresponding Abelian variety with level structure. We write $\sK_\alpha$ and $\sK_\alpha^*$ for the Kummer surface and its dual, respectively. We will explicitly construct a model of the curve $C_\alpha$ and the data that marks the level-$3$ structure on it.

\subsection{Explicitly marking a level structure}

\begin{dfn}
Given a group scheme $\Sigma$ over $k$ and a quadratic extension $k(\sqrt{d})/k$, we define the \emph{quadratic twist} $\Sigma^{(d)}$ by the short exact sequence
\[0\to\Sigma^{(d)}\to\Res_{k(\sqrt{d})/k}\Sigma\to \Sigma\to 0,\]
where $\Res_{k(\sqrt{d})/k}(\Sigma)$ stands for the Weil restriction of scalars and the third arrow is the map induced by the norm from $k(\sqrt{d})$ to $k$.
\end{dfn}
In particular, we note that $\mu_3=(\ZZ/3)^{(-3)}$.

\begin{prop}\label{P:cyclic} Let $C\colon y^2=F(x,z)$ be a model of a genus $2$ curve over a field $k$ of characteristic distinct from $2$, where $\deg(F)=6$ Then the cyclic order $3$ subgroups of $\Pic^0(C/k^\sep)$ isomorphic to $(\ZZ/3\ZZ)^{(d)}$ as a Galois module are in bijection with decompositions of the form
\[F(x,z)=d(G(x,z)^2+4\lambda H(x,z)^3),\]
where $\deg(H)=2$ and $\deg(G)=3$.
\end{prop}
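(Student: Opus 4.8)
The plan is to identify a cyclic order-$3$ subgroup of $\Pic^0(C/k^\sep)$ with the $3$-torsion point $P$ that generates it, and to recall the classical description of such points on a genus $2$ Jacobian via unramified cyclic triple covers. A nontrivial $3$-torsion class on $C$ can be written as $[D-\kappa]$ with $D$ an effective divisor of degree $2$; saying $3(D-\kappa)\sim 0$ means there is a function with divisor $3D-3\kappa$, i.e. a function in the Riemann--Roch space $\ell(3\kappa)$ whose divisor of zeros is $3D$. Concretely, on the model $y^2=F(x,z)$ the space $\ell(3\kappa)$ has basis $1,x/z,(x/z)^2,(x/z)^3,y/z^3$, so such a function is $y/z^3 + (\text{cubic in }x/z)$ up to scalar (the $y$-coefficient cannot vanish, else $P$ would be trivial). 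First I would show that demanding this function to have a triple zero along a degree-$2$ locus $H(x,z)=0$ forces, after clearing denominators, an identity $\bigl(2y+G(x,z)\bigr)\bigl(2y-G(x,z)\bigr)=$ (something)$\cdot H(x,z)^3$ for a cubic $G$; using $y^2=F$ this is exactly $F-\tfrac14 G^2 = \lambda H^3$, which rearranges to $F = \tfrac14(G^2+4\lambda H^3)$. Absorbing the $\tfrac14$ by rescaling $G,\lambda$ (characteristic $\ne 2$) gives the stated shape with an as-yet-unexplained global factor $d$.

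Next I would pin down where the twisting parameter $d$ comes from and why it records the Galois module structure. The point is that $D$, hence the cubic $G$ and the quadratic $H$, need only be defined over $k^\sep$, whereas the subgroup $\langle P\rangle$ is defined over $k$; the Galois action sends $P$ to $\pm P$, i.e. it may send the pair $(G,H)$ to $(\zeta^{\pm1}$-scaled versions) and in particular can flip the sign of $G$ while fixing $H$ up to scalar. Tracking this shows that the common scalar by which $F$ and $G^2+4\lambda H^3$ differ is a well-defined class $d\in k^\times/(k^\times)^2$, and that the Galois character through which $\mathrm{Gal}$ acts on $\langle P\rangle$ is precisely the quadratic character of $k(\sqrt d)/k$. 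So a subgroup isomorphic to $(\ZZ/3)^{(d)}$ as a Galois module is exactly one for which the decomposition can be normalized to have leading factor $d$; this also matches the remark $\mu_3=(\ZZ/3)^{(-3)}$, since the $\mu_3$-type subgroup is the one coming from a cover defined over $k(\zeta_3)=k(\sqrt{-3})$. I would also use Proposition~\ref{P:kummer_find_intersection} to interpret $H$ geometrically: $H(x,z)=0$ cuts out $x_*(D)$ on $\PP^1$, which makes the correspondence canonical and reversible.

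For the converse direction I would start from a decomposition $F=d(G^2+4\lambda H^3)$, let $D$ be the degree-$2$ divisor on $C$ with $x_*(D)$ cut out by $H$ and lying ``above $G$'' (i.e. the component of $\{2y=dG,\ H=0\}$ or its conjugate), and verify directly that the function $(2y-dG(x,z))/z^3 \in \ell(3\kappa)$ has divisor $3D-3\kappa$, so $[D-\kappa]$ is a nontrivial $3$-torsion point; a small argument with the two sign choices $\pm G$ shows the two resulting points are negatives of each other, hence generate the same cyclic subgroup. Running the Galois-character computation of the previous paragraph on this explicitly-built point shows its module structure is $(\ZZ/3)^{(d)}$, giving the bijection. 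Finally I would note the two normalization ambiguities — scaling $(G,H,\lambda)\mapsto(uG,uH,u\lambda)$ for the cubic/quadratic and the obvious rescaling of $F$ by squares — are exactly what is quotiented out, so the correspondence is well-defined both ways.

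The main obstacle I expect is not the Riemann--Roch bookkeeping, which is routine, but making the appearance and meaning of the twist parameter $d$ genuinely precise: one must check that flipping $G\mapsto -G$ is the only Galois ambiguity (no cube-root-of-unity scaling of $H$ can intervene without also being absorbed), that the resulting class in $k^\times/(k^\times)^2$ is independent of all the normalization choices, and that it literally computes the Galois character on $\langle P\rangle$ rather than merely being correlated with it. Getting this cohomological identification clean — ideally by exhibiting the unramified cyclic cubic cover $y^2+G y = \lambda H^3$ of $C$ attached to $P$ and reading the twist off its defining data — is where the real content lies.
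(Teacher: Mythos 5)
Your proposal is correct and follows essentially the same route as the paper: both rest on representing a nontrivial class uniquely as $[D-\kappa]$, on the function $(y-\sqrt{d}\,G)/z^3$ having divisor $3(D-\kappa)$ (your Riemann--Roch/norm computation is precisely the paper's ``the function bearing witness to principality gives rise to $\lambda,G$''), and on reading the twist $d$ off the Galois action that swaps $D$ with $\iota D$ and hence flips the sign of $G$. The only nit is a normalization slip in your converse: the relevant divisor lies on $\{2y=\sqrt{d}\,G,\ H=0\}$, not $\{2y=dG,\ H=0\}$.
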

\begin{proof}
First assume we have a decomposition of the required form.
The effective degree $2$ divisors $D_1=\{H(x,z)=0,y-\sqrt{d}G(x,z)$ and $D_2=\{H(x,z)=0,y+\sqrt{d}G(x,z)\}$ are defined over $k$ if $d$ is a square and quadratic conjugate over $k(\sqrt{d})$ otherwise. We write $\kappa=\{z=0\}$ for the effective canonical divisor supported at $z=0$. Then $D_1+D_2$ is linearly equivalent to $2\kappa$ and the divisor of the function $(y-G)/z^3$ is $3(D_1-\kappa)$. This shows $\{0,[D_1-\kappa],[D_2-\kappa]\}\subset \Pic^0(C/k^\sep)$ is a subgroup with the required Galois module structure. This representation is unique because for any non-zero divisor class there is a unique effective degree $2$ divisor $D$ such that $D-\kappa$ represents the class.

Conversely, given a divisor $D-\kappa$, where the direct image of $D$ on the $\PP^1$ with coordinates $(x:z)$ is determined by $H(x,z)=0$, it is straightforward to check that if $3(D-\kappa)$ is principal, the function bearing witness to that fact gives rise to $\lambda, G$. 
\end{proof}

\begin{cor}\label{C:3level_marking}
Let $C\colon y^2=F(x,z)$ be a genus $2$ curve over a field $k$ of characteristic different from $2$ in which $-3$ is not a square.
A full level-$3$ structure (up to conjugation on $\mu_3$) on $\sJ_C$ is given by $4$ distinct decompositions
\[\begin{split}
F&=G_1^2+4\lambda_1H_1^3=G_2^2+4\lambda_2H_2^3\\
F&=-3((G_1')^2+4\lambda_1'(H_1')^3)=-3((G_2')^2+4\lambda_2'(H_2')^3)
\end{split}\]
\end{cor}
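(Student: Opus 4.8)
The plan is to reduce Corollary~\ref{C:3level_marking} to Proposition~\ref{P:cyclic}: first decompose a full level-$3$ structure into its constituent cyclic order-$3$ subgroups, and then recognise each such subgroup via a decomposition of $F$.

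First I would unwind the definition of a full level-$3$ structure of type $\Sigma=(\ZZ/3)^2\times\mu_3^2$. An isomorphism $\Sigma\to\sJ_C[3]$ respecting the pairings splits $\sJ_C[3]$ as a direct sum $V\oplus W$, where $V$ is the image of $(\ZZ/3)^2$ and $W$ the image of $\mu_3^2$. Because the pairing on $\Sigma$ vanishes on each of the two factors separately, both $V$ and $W$ are isotropic, hence, being $2$-dimensional over $\FF_3$, maximal isotropic; and since $V$ and $W$ are complementary the restricted Weil pairing $V\times W\to\mu_3$ is perfect, identifying $W$ with the Cartier dual $\mathrm{Hom}(V,\mu_3)$ of $V$, so that the isomorphism $\mu_3^2\to W$ is forced by the isomorphism $(\ZZ/3)^2\to V$. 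I would also record the two consequences of the standing hypothesis that $-3$ is not a square in $k$, equivalently $\zeta_3\notin k$: the group scheme $\mu_3=(\ZZ/3)^{(-3)}$ is a \emph{nontrivial} twist of $\ZZ/3$, so no cyclic order-$3$ subgroup of $\sJ_C$ can be simultaneously of $\ZZ/3$-type and of $\mu_3$-type; and, by comparing Galois actions on top exterior powers, the span of two distinct cyclic subgroups of the same type is automatically isotropic (a non-degenerate symplectic plane would have its top exterior power Galois-isomorphic to $\mu_3$, whereas that exterior power is trivial in both cases since $\mu_3^{\otimes 2}$ is trivial over $\FF_3$).

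Next I would translate the bases into decompositions. A $2$-dimensional $\FF_3$-space has exactly four cyclic subgroups, any two distinct ones spanning it; the ordered basis of $V$ that is part of the level structure distinguishes two of them, each of $\ZZ/3$-type, and Proposition~\ref{P:cyclic} with $d=1$ identifies each with a decomposition $F=G^2+4\lambda H^3$. Likewise the Cartier-dual basis of $W$ distinguishes two cyclic subgroups of $\mu_3$-type, and since $\mu_3=(\ZZ/3)^{(-3)}$, Proposition~\ref{P:cyclic} with $d=-3$ identifies each with a decomposition $F=-3(G^2+4\lambda H^3)$. This produces the four displayed decompositions, and they are pairwise distinct: the two of each flavour because a basis has two distinct coordinate lines, and a decomposition of the first flavour cannot coincide with one of the second because a subgroup cannot be of both types. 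Conversely, from four such decompositions one extracts four cyclic subgroups; the isotropy-of-the-span observation makes the spans $V$ and $W$ of the two same-type pairs maximal isotropic and of the correct types, $V\cap W=0$ (no line is of both types) forces them to be complementary, and one reconstitutes the embedding $\Sigma\to\sJ_C[3]$, well defined up to the remaining choice of generators, which is the conjugation ambiguity on $\mu_3$ in the statement.

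The main obstacle is the bookkeeping in matching the discrete data on the two sides exactly: a decomposition of $F$ only remembers a cyclic subgroup, not a generator of it, and one must check that what a genuine level structure imposes beyond the bare list of four subgroups---namely the Weil-pairing relation identifying the two $\mu_3$-type subgroups as the perpendiculars inside $W$ of the two $\ZZ/3$-type subgroups---is either automatic or precisely absorbed by the phrase ``up to conjugation on $\mu_3$''. I would want to pin this last point down carefully rather than gloss over it.
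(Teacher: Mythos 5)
Your proposal is correct and follows essentially the same route as the paper: both reduce to Proposition~\ref{P:cyclic} with $d=1$ and $d=-3$ to convert the four decompositions into the cyclic subgroups spanning $(\ZZ/3)^2$ and $(\mu_3)^2$, observe that the Weil pairing must vanish on each same-type pair because $\mu_3$ is a nontrivial twist of $\ZZ/3$ when $-3$ is not a square, and recover the $\mu_3^2$-basis as the Cartier dual of the $(\ZZ/3)^2$-basis. Your exterior-power justification of isotropy and your flagged concern about generators versus subgroups are just more explicit renderings of steps the paper's shorter proof passes over.
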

\begin{proof}
By Proposition~\ref{P:cyclic} the decompositions mark $(\ZZ/3\ZZ)^2\subset\sJ_C[3]$ and $(\mu_3)^2\subset\sJ_C[3]$, so it follows that $\sJ_C\simeq (\ZZ/3\ZZ)^2\times(\mu_3)^2$. 
The Weil pairing necessarily restricts to the trivial pairing on $(\ZZ/3\ZZ)^2$ and its nondegeneracy induces a natural identification on $(\mu_3)^2$ with the Cartier dual of $(\ZZ/3\ZZ)^2$. As a result, a basis choice for $(\ZZ/3\ZZ)^2$, which is given by the first two decompositions, also induces a natural basis choice on $(\mu_3)^2$ by taking a dual basis.
\end{proof}

\subsection{Some results by Coble}

Coble \cite{Coble1917}*{(52)} (see Hunt\cite{Hunt96} for a more modern exposition) gives a model for $\sP_\alpha\subset\PP^8$ as an intersection of $9$ quadrics. He works over $k=\CC$, so $\sP_\alpha$ is isomorphic to $\sJ_\alpha$, but not canonically so. Indeed, an origin is not marked on $\sP_\alpha$.

He also gives direct constructions for the Weddle surface $\sW_\alpha$ \cite{Coble1917}*{p.~362 above (70)} and its symmetroid $\sK_\alpha^*$ \cite{Coble1917}*{p.~360 (63)}.
One can recognize from his description a $4$-dimensional system of quadrics through $6$ points spanned by
\begin{equation}\label{eq:The quadrics in z} \begin{array}{lcl} Q_1=\alpha_0z_0^2-\alpha_2z_2z_3-\alpha_3z_1z_3-\alpha_4z_2z_1,\\ Q_2=\alpha_0z_1^2+\alpha_1z_2z_3+\alpha_3z_0z_3-\alpha_4z_2z_0,\\  Q_3=\alpha_0z_2^2+\alpha_1z_1z_3-\alpha_2z_0z_3+\alpha_4z_0z_1,\\ Q_4=\alpha_0z_3^2+\alpha_1z_2z_1+\alpha_2z_0z_2-\alpha_3z_0z_1, \end{array} \end{equation} 
from which one can recover $\sW_\alpha$ and $\sK^*_\alpha$. Coble \cite{Coble1917}*{bottom of p.~364} also gives a direct way of constructing from $\alpha$ a genus $0$ curve with $6$ marked points:
let $\pi_\alpha\colon\PP^4\to\PP^3$ be the projection away from $\alpha$. Then $\pi_\alpha(P_\alpha^{(3)})$ defines a plane, $\pi_\alpha(P_\alpha^{(2)}\cap P_\alpha^{(3)})$ defines a plane conic, and $\pi_\alpha(P_\alpha^{(1)}\cap P_\alpha^{(2)}\cap P_\alpha^{(3)})$ marks $6$ points on that conic.
 Coble considers the \emph{enveloping cone} $\EC_\alpha(P_\alpha^{(1)})$ and proves that $\pi_\alpha(\EC_\alpha(P_\alpha^{(1)}))$ is a model for $\sK_\alpha^*$ and that $\pi_\alpha(P^{(3)}_\alpha)$ marks a trope on it and that the $6$ points marked by the intersection of the polars are indeed nodes of $\sK_\alpha^*$.

Since we have an expression for $\sK_\alpha^*$ as a symmetroid, we can find a parametrization $\Phi_\alpha\colon\PP^1\to \pi_\alpha(P^{(2)}_\alpha\cap P^{(3)}_\alpha)$, and $\Phi_\alpha^{-1}(\bigcap_i P^{(i)}_\alpha)$ gives us $6$ points on a $\PP^1$. This determines $C_\alpha$ up to quadratic twist. We execute this procedure in the following section. The explicit marking of the level-$3$ structure as given in Section~\ref{S:marking} will confirm which twist we should take.

\begin{remark}
In the arithmetic setting the difference between a Kummer surface and its dual is more pronounced, so it is perhaps worthwhile to remind the reader of the construction explained by Coble.

The construction of $\sK_\alpha^*$ arises from the fact that the cubic polar of $B$ at $\alpha$ is isomorphic to Segre's cubic threefold over an algebraic closure of $k$. The projective dual of a Segre cubic is an Igusa quartic, so from a point $\alpha$ on $B$ we obtain a point $\alpha^*$ on a twist of an Igusa quartic, corresponding to the tangent space of $B$ at $\alpha$.

It is classical (see \cite{Dolgachev12} or \cite{Hunt96} for an account and references) that the Igusa quartic has an interpretation as the moduli space of Kummer surfaces with full level-2 structure (which consists of a labeling of the nodes). This interpretation is realized by intersecting the Igusa quartic with the tangent space at a point on it. The point itself marks one node and the components of the singular locus mark the remaining $15$. Under projective duality one can check that this intersection corresponds to the enveloping cone at $\alpha$, leading to the construction of $\sK_\alpha^*$ sketched above.

Hence we see that in fact there is a very direct way to construct from a point $\alpha\in B$ the corresponding twist of the moduli space of Kummer surfaces (and dual Kummer surfaces) with full level-$2$ structure. For further details of this surprising fact we refer the reader to Coble and Hunt.
\end{remark}

\subsection{Explicit construction of $\Phi_\alpha$}\label{S:explicit_curve}

We fix coordinates $(\eta_1:\cdots:\eta_4)$ on the codomain of $\pi_\alpha$ by identifying it with the hyperplane $y_0=0$, so that $\pi_\alpha(0:y_1:\cdots:y_4)=(y_1:\cdots:y_4)$.
We find that $\sK^*_\alpha=\pi_\alpha(\EC_\alpha(P_\alpha^{(1)}))$ has equation $\det(\sum_{i=1}^4 \eta_iQ_i)=0$ and that the trope $\pi_\alpha(P_\alpha^{(3)})$ has equation
\[(\alpha_0\alpha_1^2+\alpha_2\alpha_3\alpha_4)\eta_1+
(\alpha_0\alpha_2^2+\alpha_1\alpha_3\alpha_4)\eta_2+
(\alpha_0\alpha_3^2+\alpha_1\alpha_2\alpha_4)\eta_3+
(\alpha_0\alpha_4^2+\alpha_1\alpha_2\alpha_3)\eta_4=0.\]
Let $\langle Q_1',Q_2',Q_3'\rangle\subset \langle Q_1,Q_2,Q_3,Q_4\rangle$ be the subspace corresponding to this plane. It defines a space cubic $L_\alpha$ on $\sW_\alpha$.

The line through $(1:0:0:0)$ and $(0:\alpha_2:\alpha_3:\alpha_4)$ intersects $L_\alpha$ in $2$ points, so the planes through this line intersect $L_\alpha$ in a third point. We parametrize these planes using $(x:z)$ by the row span of
\[\begin{pmatrix}
1&0&0&0\\
0&\alpha_2&\alpha_3&\alpha_4\\
0&x&z&0
\end{pmatrix}
\]
By restricting $Q_1',Q_2'$ to this space, we can determine the third intersection point on $L_\alpha$ using $(\eta_1:\cdots:\eta_4)$ as functions in $(x:z)$. We can then find $f_\alpha(x,z)$ (up to scaling) by solving for $Q_1=Q_2=Q_3=Q_4=0$.
For $\alpha=(1:\alpha_1:\alpha_2:\alpha_3:\alpha_4)$ we find $f_\alpha(x)=G_3(x)^2+\lambda_3H_3(x)^3$ as in Proposition~\ref{P:explicit_curve}.

\begin{proof}[Proof of Proposition~\ref{P:explicit_curve}]
The argument above indicates that $y^2=f_\alpha(x)$ should be a model of $C_\alpha$ (up to quadratic twist) whenever $f_\alpha$ is square-free. From our choice of coordinates, it is clear this happens if $\alpha\notin\He(B)$ and $\alpha_4\neq 0$ and indeed, factorization of the discriminant of $f_\alpha$ confirms this. Furthermore, over fields where $2$ is invertible, the model is birational to the one given here.

In order to determine the appropriate twist, we observe that if a curve has $(\ZZ/3)^2\times(\mu_3)^2\subset J_\alpha$ then only $C_\alpha$ and its quadratic twist $C^{(-3)}_\alpha$ have $3$-torsion points defined over the base field (and indeed, taking a quadratic twist by $\sqrt{-3}$ yields an isomorphic level structure). As we will see in Section~\ref{S:marking}, the model given shows that $C_\alpha$ does have a $3$-torsion point, which verifies that we have the right twist.
\end{proof}

\begin{remark}
While the theory of Weddle and Kummer surfaces as employed here is not valid in characteristic $2$, \cite{vdGeer87}*{Theorem~3.1} yields that the moduli interpretation of $B$ holds over $\ZZ[1/3]$. In fact, while the models for $C_\alpha$ of the form $y^2=G^2+4\lambda H^3$ have bad reduction at $2$, the model
\[y^2+Gy=\lambda H^3\]
is equivalent if $2$ is invertible, and generally does have good reduction at $2$. 
\end{remark}

\subsection{Marking the $3$-torsion}
\label{S:marking}

Let $J$ be a $j$-plane. Computation shows that $\pi_\alpha(J)$ is a tangent plane to $\sK_\alpha^*$, so it corresponds to a point on $\sK_\alpha$. Using Proposition~\ref{P:kummer_find_intersection} we can find the corresponding degree $2$ divisor on the $(x:z)$-line.

In fact, for $i=1,\ldots,4$, the $j$-plane $J_i=\{y_0=y_i=0\}$ gives rise to a rational point on the Jacobian of the curve $C_\alpha$ as in Proposition~\ref{P:explicit_curve}, and further computation shows that the relevant point is of order $3$ and we obtain a decomposition $F=G_i^2+\lambda_iH_i^3$, as in Corollary~\ref{C:3level_marking}. Indeed, the form given in Proposition~\ref{P:explicit_curve} is the decomposition for $i=3$.

\begin{cor} \leavevmode
\begin{enumerate}
\item[(a)] The $j$-planes are in Galois covariant bijective correspondence with the order $3$ subgroups of $\sJ_\alpha[3]$.

\item[(b)] The marking of the level-$3$ structure on $C_\alpha$ can be described by a plane conic $\pi_\alpha(P^{(2)}_\alpha\cap P^{(3)}_\alpha)$ with $6$ points $\pi_\alpha(P^{(1)}_\alpha\cap P^{(2)}_\alpha\cap P^{(3)}_\alpha)$ together with $40$ lines $\pi_\alpha(J)\cap P^{(3)}$, where $J$ runs through the $j$-planes.

\item[(c)] Two cyclic subgroups pair trivially if and only if the corresponding $j$-planes lie in a common Steiner prime.
\end{enumerate}
\end{cor}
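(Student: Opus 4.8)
The plan is to treat (a) as the substantive part and to obtain (b) and (c) by assembling results already in hand. For (a), Proposition~\ref{P:Jplane_3torsion}(1) and (2) associate to each $j$-plane $J$ a point of $\sK_\alpha$ that lifts to a pair $\{P_J,-P_J\}$ of points of order exactly $3$ on $\sJ_\alpha$ (the order being pinned down by the computation recorded just above), hence a cyclic subgroup $\langle P_J\rangle\subset\sJ_\alpha[3]$. This defines a map $\Psi$ from the $40$ $j$-planes to the order-$3$ subgroups of $\sJ_\alpha[3]$. Since the projection $\pi_\alpha$, the enveloping cone of the cubic polar, the symmetroid model of $\sK_\alpha^*$, the tangent-plane--to--node correspondence, and the lift to $\sJ_\alpha$ are all cut out over $k(\alpha)$, the map $\Psi$ commutes with the Galois action; so it is the desired Galois-covariant correspondence, and it remains to prove it is bijective. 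As $\sJ_\alpha[3]\cong(\ZZ/3)^4$ has $(3^4-1)/(3-1)=40$ cyclic order-$3$ subgroups, the target also has $40$ elements, so bijectivity is equivalent to injectivity.

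For injectivity the plan is to reduce to a single $\alpha$. Over $B\setminus\He(B)$ the universal level-$3$ structure canonically identifies the group scheme $\sJ[3]$ with the constant $\Sigma=(\ZZ/3)^2\times\mu_3^2$, so its scheme of order-$3$ subgroups is finite étale over the base, as is the discrete set of $40$ $j$-planes; since $\Psi$ is built from algebraic data and lands in such a discrete scheme, it is a morphism from the irreducible base $B\setminus\He(B)$ and hence independent of $\alpha$. It therefore suffices to check injectivity at one convenient point, which the explicit models of Sections~\ref{S:explicit_curve} and~\ref{S:marking} turn into a finite verification: evaluate $\Psi$ on the four rational planes $J_i$, the four rational planes $J_i'$, and the remaining sixteen conjugate pairs, and check that the resulting $40$ decompositions of $F$ in the sense of Proposition~\ref{P:cyclic} are pairwise distinct. (Alternatively, one can observe that $\PSp_4(\FF_3)$ acts transitively on the $40$ $j$-planes and, through the moduli interpretation, transitively on the $40$ cyclic subgroups, so that an equivariant map between transitive sets of equal size is automatically a bijection.) I expect this injectivity step --- and in particular either organising the computation cleanly or making the $\PSp_4(\FF_3)$-equivariance of $\Psi$ precise --- to be the only genuine obstacle; everything else is assembly.

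For (b), the plan is to combine Coble's construction with Propositions~\ref{P:kummer_find_intersection} and~\ref{P:explicit_curve}. Coble's construction recalled above presents the data of $C_\alpha$, up to quadratic twist, as the plane conic $\pi_\alpha(P^{(2)}_\alpha\cap P^{(3)}_\alpha)$ together with the $6$ marked points $\pi_\alpha(P^{(1)}_\alpha\cap P^{(2)}_\alpha\cap P^{(3)}_\alpha)$ on it, and Proposition~\ref{P:explicit_curve} fixes the twist. Given a $j$-plane $J$, part (a) attaches to it a point of $\sK_\alpha$ realized by the tangent plane $\pi_\alpha(J)$ to $\sK_\alpha^*$, whose distinguished trope is $P^{(3)}:=\pi_\alpha(P^{(3)}_\alpha)$ and whose conic $L$ is $\pi_\alpha(P^{(2)}_\alpha\cap P^{(3)}_\alpha)$. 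Proposition~\ref{P:kummer_find_intersection} then identifies the effective degree-$2$ divisor $D_J$ representing the corresponding class on the hyperelliptic $\PP^1$ via $x_*(D_J)=T_{D_J}\cdot L$; that is, $x_*(D_J)$ is cut on the conic by the line $\pi_\alpha(J)\cap P^{(3)}$, so this line records the quadratic $H_J$. Combined with the decomposition $F=G_J^2+\lambda_J H_J^3$ in the $(\ZZ/3)$-case and its $\mu_3$-twisted analogue, this is precisely the data marking the level-$3$ structure in Corollary~\ref{C:3level_marking}; letting $J$ range over the $40$ $j$-planes produces the $40$ lines asserted.

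For (c), the plan is to apply (a) to pass between $j$-planes and cyclic order-$3$ subgroups. Two such subgroups $\langle P_J\rangle$ and $\langle P_{J'}\rangle$ pair trivially precisely when $e_3(P_J,P_{J'})=1$, a condition independent of the sign choices since $e_3(-P,Q)=e_3(P,Q)^{-1}$ and an element of $\mu_3$ is trivial iff its inverse is; by Proposition~\ref{P:Jplane_3torsion}(3) this happens iff $J$ and $J'$ lie in a common Steiner prime. This is consistent with Proposition~\ref{P:Jplane_3torsion}(4): a Steiner prime corresponds to a maximal isotropic subgroup, i.e.\ a family of cyclic subgroups that pair trivially in pairs.
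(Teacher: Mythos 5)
Your treatment of (a) and (b) is sound and, in its parenthetical form, coincides with the paper's argument: the paper verifies by explicit computation that one particular $j$-plane marks a degree~$2$ effective divisor giving an order~$3$ subgroup, and then invokes the transitivity of $\Sp_4(\FF_3)$ on the $40$ $j$-planes and on the $40$ cyclic order~$3$ subgroups, together with the fact that the group acts by linear transformations on $\PP^4$ (so that the polar/projection construction is equivariant), to get the bijection. Your primary route --- constancy of $\Psi$ over the irreducible base followed by a finite check that the $40$ decompositions at a single $\alpha$ are pairwise distinct --- is a valid alternative but replaces one symmetry observation by a substantially heavier computation; the equivariance argument is the cleaner path and is the one you should make precise.

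The genuine gap is in (c): your argument is circular. Proposition~\ref{P:Jplane_3torsion}(3) is the \emph{same assertion} as part (c) of this corollary, and the paper contains no independent proof of it --- the corollary in Section~\ref{S:marking} is precisely where statements (3) and (4) of that proposition get established. Citing the proposition here therefore proves nothing; the only content you supply for (c) is the (correct but minor) observation that triviality of the pairing is independent of the choice of generators. You need an actual argument. The paper's is short: $J_1$ and $J_2$ lie in the common Steiner prime $\{y_0=0\}$, and their associated $3$-torsion points are defined over the base field $k$, which need not contain a primitive cube root of unity; since the Weil pairing is Galois covariant, its value on two $k$-rational points is a Galois-invariant element of $\mu_3$ and hence trivial. (Alternatively one computes the pairing explicitly or uses the criterion of \cite{BruinFlynnTesta14}.) One then finishes by symmetry: under $\Sp_4(\FF_3)$ the order~$9$ subgroups of $\sJ_\alpha[3]$ split into an orbit of length $40$ (the maximal isotropic subspaces) and one of length $90$, and there are exactly $40$ Steiner primes, so the isotropic pairs are exactly those coming from a common Steiner prime. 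Some step of this kind must be supplied before (c) can be considered proved.
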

\begin{proof}
For (a) and (b), the computation referenced above shows that a particular $j$-plane marks a degree $2$ effective divisor on the $(x:z)$-line that corresponds to an order $3$ subgroup.
The full result now follows by symmetry, because $\Sp_4(\FF_3)$ acts transitively on the $40$ order $3$ subgroups of $\sJ_\alpha[3]$, as well as on the $j$-planes, and acts via linear transformations on $B\subset\PP^4$.

For (c), note that $J_1,J_2$ lie in the common Steiner prime $\{y_0=0\}$ and that their corresponding $3$-torsion points are defined over the base field, which doesn't necessarily contain a cube root of unity. Since the Weil pairing is Galois covariant, it follows they must pair trivially.  Alternatively, one can check this by explicitly computing the pairing or by using the criterion given in \cite{BruinFlynnTesta14}.

The general result now follows from the fact that under $\Sp_4(\FF_3)$ the order $9$ subgroups form two orbits, one of length $40$ consisting maximal isotropic subspaces, and one of length $90$, consisting of the other spaces. Indeed, there are exactly $40$ Steiner primes.
\end{proof}

In order to show that the level-$3$ structure marked is indeed of the type $\Sigma_{2,3}$ we appeal to Corollary~\ref{C:3level_marking}. We have already seen that $J_1,\ldots,J_4$ give rise to decompositions of the first type. 
The $j$-planes $J'_i=\{y_0+\cdots+y_4=y_0+y_i=0\}$ similarly give rise to decompositions of the form $F=-3((G_i')^2+4\lambda_i'(H_i')^3)$.

\begin{remark} By a curious coincidence the decompositions coming from $J_1,\ldots,J_4$ computed in the way suggested above, hold regardless of the Burkhardt relation, and so does the decomposition specified by $J_4'$. It follows that for any $\alpha\in\PP^4$ outside a certain closed subset, the curve $C_\alpha$ has a Jacobian with a $(\ZZ/3)^2\times\mu_3$-level structure marked on it. We only need $\alpha\in B$ to get the second copy of $\mu_3$. We list the relevant decompositions in Appendix~\ref{Appendix} and \cite{BruinNasserden17e}.
\end{remark}

\subsection{Genus $2$ curves as cubic discriminants}
\label{S:disc}
Genus $2$ curves also arise as discriminants of degree $3$, genus $1$ covers of $\PP^1$. Indeed, such a genus $1$ curve $E$ has a degree $3$ divisor, so $E$ admits a cubic model in $\PP^2$. Over fields of characteristics different from $2,3$ we can assume that $E$ is given by a model
\[E\colon w^3+3Hw+2G=0,\]
where $H,G\in k[x,z]$ are forms of degrees $2,3$ respectively, and our degree $3$ map to $\PP^1$ is given by $(x:z:w)\mapsto(x:z)$.

The discriminant of this cubic with respect to $w$ is $-3\cdot 6^2(G^2+H^3)$, which is square-free precisely if $E$ is nonsingular and $H,G$ are coprime. In that case,
\[C\colon y^2=-3(G^2+H^3)\]
is a genus $2$ curve and $D= C\times_{\PP^1} E$ is an unramified $\mu_3$-cover of $C$ obtained by adjoining a cube root of the function $(G-y)/z^3$.
Indeed, by geometric class field theory, specifying an order $3$ subgroup of $\sJ_C$ amounts to specifying an unramified (geometrically Galois) cover $D\to C$. Furthermore, the involution that generates $\Aut(D/C)$ is the pull-back of the hyperelliptic involution on $C$, so a quadratic twist of $C$ has a corresponding quadratic twist of $D$ as a cover, with the same quotient $E$.
Hence we see that specifying a point $\alpha$ on the Burkhardt together with a $j$-plane amounts to specifying a cubic genus $1$ cover of $\PP^1$.

Baker and Hunt observe that the cubic polar $P^{(1)}_\alpha$ cuts out a Hesse pencil on a given $j$-plane as $\alpha$ varies. We verify that this is indeed the relevant cubic $E_{J,\alpha}$ and identify the relevant $3$-cover.

\begin{proof}[Proof of Proposition~\ref{P:disc}] We set $(y_2:y_3:y_4)=(\alpha_2w:\alpha_3w+x:\alpha_4w+z)$ and take the discriminant of the resulting cubic with respect to $w$. This gives a sextic form in $x,z$. We compute and compare the Igusa invariants of this form and of $C_\alpha$ and find that they agree up to weighted projective equivalence on an open part of $B$. Since Igusa invariants classify sextic forms up to scaling, this verifies that $C_\alpha$ (up to quadratic twist) occurs as the discriminant.

It follows that $C_\alpha\times_{\PP^1} E_{J,\alpha}\to C_\alpha$ is an unramified, geometrically Abelian, cover and hence capitalizes some order $3$ subgroup of $\sJ_\alpha$. We can check computationally which one by specializing $\alpha$ to a point where the triples $(H,\lambda,G)$ associated to the $8$ $j$-planes $J_i,J_i'$ lead to cubics $w^3+3\lambda H w+\lambda G=0$ have pairwise distinct $j$-invariants and check the appropriate identity holds for the particular point. It follows on an open by continuity.
\end{proof}

\begin{remark}\label{R:cubic_cover_coordinate_free}
The map $(\alpha_0:\cdots:\alpha_4)\mapsto (\alpha_2:\alpha_3:\alpha_4)$ that gives the data for the cubic cover $E_{J,\alpha}\to\PP^1$ can be described in the following coordinate-free way. The $j$-plane $J_1$ is contained in four Steiner primes. Each of these Steiner primes contains $3$ other $j$-planes that intersect $J_1$ in a line. Each such triple of $j$-planes intersect in a common point. Hence, a $j$-plane gives rise to $4$ points. These turn out to be collinear. For instance, for $J_1\colon x_0=x_1=0$, these points are $(0:1:0:0:0),(-1:1:0:0:0),(-\zeta_3:1:0:0:0),(\zeta_3+1:1:0:0:0)$. For $J_1$ we find the line is $L_{J_1}\colon x_2=x_3=x_4=0$. The point $\alpha$ gets mapped to $J_1$ by taking the plane spanned by $\alpha$ and $L_{J_1}$ and intersecting it with $J_1$.
\end{remark}

\section*{Acknowledgments}
We thank Riccardo Salvati Manni for pointing out a subtlety in the relation between the dual of the Burkhardt quartic and the Satake compactification of the moduli space $\sA_{2,3}$. We also thank Noam Elkies for various remarks and suggestions that have made it into the paper, in particular the cases $q=16,19$ in Remark~\ref{R:zeta_small_q}.
Finally, we are grateful to an anonymous referee for pointing out various historical and very readable references to Baker and Coble.
\begin{bibdiv}
\begin{biblist}
\bib{Baker1907}{book}{
	author={Baker, H. F.},
	title={An introduction to the theory of multiply periodic functions},
	publisher={Cambridge University Press},
	date={1907},
	pages={364},
}

\bib{Baker46}{book}{
	author={Baker, H. F.},
	title={A Locus with $25920$ Linear Self-Transformations},
	series={Cambridge Tracts in Mathematics and Mathematical Physics, no. 39},
	publisher={Cambridge, at the University Press; New York, The Macmillan Company},
	date={1946},
	pages={xi+107},
}

\bib{magma}{article}{
	title={The Magma algebra system. I. The user language},
	author={Bosma, The MAGMA computer algebra system is described in Wieb},
	author={Cannon, John},
	author={Playoust, Catherine},
	journal={J. Symbolic Comput.},
	volume={24},
	number={3--4},
	pages={235--265},
	date={1997},
}

\bib{BruinFlynnTesta14}{article}{
	author={Bruin, Nils},
	author={Flynn, E. Victor},
	author={Testa, Damiano},
	title={Descent via $(3,3)$-isogeny on Jacobians of genus 2 curves},
	journal={Acta Arith.},
	volume={165},
	date={2014},
	number={3},
	pages={201--223},
	issn={0065-1036},
}

\bib{BruinNasserden17e}{article}{
	author={Bruin, Nils},
	author={Nasserden, Brett},
	title={Electronic resources},
	eprint={http://www.cecm.sfu.ca/~nbruin/burkhardtquartic},
	date={2017},
}

\bib{Burkhardt1891}{article}{
	author={Burkhardt, Heinrich},
	title={Untersuchungen aus dem Gebiete der hyperelliptischen Modulfunctionen},
	language={German},
	journal={Math. Ann.},
	volume={38},
	date={1891},
	number={2},
	pages={161--224},
	issn={0025-5831},
}

\bib{Cassels-Flynn96}{book}{
	author={Cassels, J. W. S.},
	author={Flynn, E. V.},
	title={Prolegomena to a middlebrow arithmetic of curves of genus $2$},
	series={London Mathematical Society Lecture Note Series},
	volume={230},
	publisher={Cambridge University Press, Cambridge},
	date={1996},
	pages={xiv+219},
	isbn={0-521-48370-0},
}

\bib{Coble1917}{article}{
	author={Coble, Arthur B.},
	title={Point sets and allied Cremona groups. III},
	journal={Trans. Amer. Math. Soc.},
	volume={18},
	date={1917},
	number={3},
	pages={331--372},
	issn={0002-9947},
}

\bib{Coble1922}{article}{
	author={Coble, Arthur B.},
	title={Associated sets of points},
	journal={Trans. Amer. Math. Soc.},
	volume={24},
	date={1922},
	number={1},
	pages={1--20},
	issn={0002-9947},
}

\bib{Coble1929}{book}{
	author={Coble, Arthur B.},
	title={Algebraic geometry and theta functions},
	series={Revised printing. American Mathematical Society Colloquium Publication, vol. X},
	publisher={American Mathematical Society, Providence, R.I.},
	date={1961},
}

\bib{JongShepVen90}{article}{
	author={de Jong, A. J.},
	author={Shepherd-Barron, N. I.},
	author={Van de Ven, A.},
	title={On the Burkhardt quartic},
	journal={Math. Ann.},
	volume={286},
	date={1990},
	number={1-3},
	pages={309--328},
	issn={0025-5831},
}

\bib{Dolgachev12}{book}{
	author={Dolgachev, Igor V.},
	title={Classical algebraic geometry},
	note={A modern view},
	publisher={Cambridge University Press, Cambridge},
	date={2012},
	pages={xii+639},
	isbn={978-1-107-01765-8},
}

\bib{Elkies99}{article}{
	author={Elkies, Noam D.},
	title={The identification of three moduli spaces},
	journal={Arxiv preprint math/9905195},
	eprint={https://arxiv.org/abs/math/9905195},
	date={1999},
}

\bib{Finkelnberg89}{thesis}{
	title={On the {G}eometry of the {B}urkhardt {Q}uartic},
	school={Leiden},
	author={Finkelnberg,Hans},
	year={1989},
	note={(Ph.D. thesis)},
}

\bib{FreitagSalvati04}{article}{
	author={Freitag, Eberhard},
	author={Salvati Manni, Riccardo},
	title={The Burkhardt group and modular forms},
	journal={Transform. Groups},
	volume={9},
	date={2004},
	number={1},
	pages={25--45},
	issn={1083-4362},
}

\bib{HoffmanWeintraub01}{article}{
	author={Hoffman, J. William},
	author={Weintraub, Steven H.},
	title={The Siegel modular variety of degree two and level three},
	journal={Trans. Amer. Math. Soc.},
	volume={353},
	date={2001},
	number={8},
	pages={3267--3305},
	issn={0002-9947},
}

\bib{Howard-ea08}{article}{
	author={Howard, Ben},
	author={Millson, John},
	author={Snowden, Andrew},
	author={Vakil, Ravi},
	title={A description of the outer automorphism of $S_6$, and the invariants of six points in projective space},
	journal={J. Combin. Theory Ser. A},
	volume={115},
	date={2008},
	number={7},
	pages={1296--1303},
	issn={0097-3165},
}

\bib{Hunt96}{book}{
	author={Hunt, Bruce},
	title={The geometry of some special arithmetic quotients},
	series={Lecture Notes in Mathematics},
	volume={1637},
	publisher={Springer-Verlag, Berlin},
	date={1996},
	pages={xiv+332},
	isbn={3-540-61795-7},
}

\bib{HuntWeintraub94}{article}{
	author={Hunt, Bruce},
	author={Weintraub, Steven H.},
	title={Janus-like algebraic varieties},
	journal={J. Differential Geom.},
	volume={39},
	date={1994},
	number={3},
	pages={509--557},
	issn={0022-040X},
}

\bib{Mestre91}{article}{
	author={Mestre, Jean-Fran\c {c}ois},
	title={Construction de courbes de genre $2$ \`a partir de leurs modules},
	language={French},
	conference={ title={Effective methods in algebraic geometry}, address={Castiglioncello}, date={1990}, },
	book={ series={Progr. Math.}, volume={94}, publisher={Birkh\"auser Boston, Boston, MA}, },
	date={1991},
	pages={313--334},
}

\bib{Nasserden16}{thesis}{
	title={On the {G}eometry of the {B}urkhardt {Q}uartic},
	school={Simon Fraser University},
	author={Nasserden, Brett},
	eprint={https://theses.lib.sfu.ca/thesis/etd9738},
	note={(M.Sc.~ thesis)},
	year={2016},
}

\bib{RenSamSturmfels14}{article}{
	author={Ren, Qingchun},
	author={Sam, Steven V.},
	author={Sturmfels, Bernd},
	title={Tropicalization of classical moduli spaces},
	journal={Math. Comput. Sci.},
	volume={8},
	date={2014},
	number={2},
	pages={119--145},
	issn={1661-8270},
}

\bib{Todd1936}{article}{
	author={Todd, J.A.},
	title={On a quartic primal with forty-five nodes, in space of four dimensions},
	journal={The Quarterly Journal of Mathematics},
	volume={os-7},
	number={1},
	date={1936},
	pages={168--174},
}

\bib{Varchenko83}{article}{
	author={Varchenko, A. N.},
	title={Semicontinuity of the spectrum and an upper bound for the number of singular points of the projective hypersurface},
	language={Russian},
	journal={Dokl. Akad. Nauk SSSR},
	volume={270},
	date={1983},
	number={6},
	pages={1294--1297},
	issn={0002-3264},
}

\bib{vGeemen92}{article}{
	author={van Geemen, Bert},
	title={Projective models of Picard modular varieties},
	conference={ title={Classification of irregular varieties}, address={Trento}, date={1990}, },
	book={ series={Lecture Notes in Math.}, volume={1515}, publisher={Springer, Berlin}, },
	date={1992},
	pages={68--99},
}

\bib{vdGeer87}{article}{
	author={van der Geer, Gerard},
	title={Note on abelian schemes of level three},
	journal={Math. Ann.},
	volume={278},
	date={1987},
	number={1-4},
	pages={401--408},
	issn={0025-5831},
}

\end{biblist}
\end{bibdiv}
\appendix
\section{Formulae}\label{Appendix}

In this appendix we provide the most important polynomial expressions in computer-readable form. They are expressed in a form that is readily readable by Magma, but with a slight amount of editing it should be possible to make it readable to any other computer algebra system. See
\cite{BruinNasserden17e} for the same data as a plain text file.
\begin{tiny}
\begin{verbatim}
k:=Integers();
Py<y0,y1,y2,y3,y4>:=PolynomialRing(k,5);
Pt<t1,t2,t3>:=PolynomialRing(k,3);
Ka<a1,a2,a3,a4>:=FunctionField(k,4);
KaX<X>:=PolynomialRing(Ka);
//the defining homogeneous equation of the model of the Burkhardt Quartic we use:
B:=y0^4+y0*y1^3+y0*y2^3+y0*y3^3+3*y1*y2*y3*y4+y0*y4^3;

//A description of a birational map A^3->B given by (y0:y1:y2:y3:y4)
//as polynomials in (t1,t2,t3)
phi:=[t1^3-3*t1^2*t3-3*t1*t2^2-3*t1*t2*t3-t2^3-1,
-t1^3+3*t1^2*t3-3*t1*t3^2+t2^3+1,
-t1^4+t1^3*t2+3*t1^3*t3-3*t1^2*t2*t3-3*t1^2*t3^2-2*t1*t2^3-3*t1*t2^2*t3+t1-t2^4-t2,
-t1^4+4*t1^3*t3+3*t1^2*t2^2+3*t1^2*t2*t3-3*t1^2*t3^2+t1*t2^3-3*t1*t2^2*t3-3*t1*t2*t3^2+t1-t2^3*t3-t3,
-t1^4-t1^3*t2+2*t1^3*t3+3*t1^2*t2*t3+t1*t2^3+3*t1*t2^2*t3+t1+t2^4+t2^3*t3+t2+t3];

//A list of descriptions of a birational map B->P^3
//given as a list of lists (t0:t1:t2:t3) as homogeneous polynomials in
//(y0:y1:y2:y3:y4)
psis:=[[y0^3-y0^2*y1+y0*y1^2,
-y0^2*y3-y0^2*y4+y0*y1*y2,
y0^2*y2-y0*y1*y2+y0*y1*y3+y0*y1*y4,
-y0^2*y4+y0*y1*y2-y0*y1*y3+y1^2*y3
],[
3*y0*y2^2*y4-3*y0*y3^2*y4+3*y0*y3*y4^2-3*y0*y4^3-3*y1*y2^2*y4-3*y1*y2*y3*y4-3*y1*y2*y4^2,
-3*y0^3*y4-3*y0^2*y1*y4-3*y2^3*y4-3*y2^2*y3*y4-3*y2^2*y4^2,
3*y0^2*y1*y4+3*y0*y1^2*y4+3*y2^3*y4-3*y2*y3^2*y4+3*y2*y3*y4^2-3*y2*y4^3,
y0^3*y2+y0^3*y3-2*y0^3*y4-3*y0^2*y1*y4+y1^3*y2+y1^3*y3+y1^3*y4+y2^4+y2^3*y3-2*y2^3*y4-
3*y2^2*y4^2+y2*y3^3+y2*y4^3+y3^4-2*y3^3*y4+3*y3^2*y4^2-2*y3*y4^3+y4^4
],[
3*y0^2*y2*y4-3*y0*y1*y2*y4+3*y1^2*y2*y4,
-3*y0*y2*y3*y4-3*y0*y2*y4^2+3*y1*y2^2*y4,
3*y0*y2^2*y4-3*y1*y2^2*y4+3*y1*y2*y3*y4+3*y1*y2*y4^2,
-y0^3*y1-3*y0*y2*y4^2-y1^4-y1*y2^3+3*y1*y2^2*y4-3*y1*y2*y3*y4-y1*y3^3-y1*y4^3
],[
-y0^2*y2*y3-y0^2*y2*y4-y0^2*y3^2+y0^2*y3*y4-y0^2*y4^2-y0*y1*y2^2+y0*y1*y3^2-y0*y1*y3*y4+
y0*y1*y4^2,
y0^2*y1^2+y0*y1^3+y0*y2*y3^2+2*y0*y2*y3*y4+y0*y2*y4^2+y0*y3^3+y0*y4^3+3*y1*y2*y3*y4,
y0^3*y1-y0*y1^3-y0*y2^2*y3-y0*y2^2*y4-y0*y2*y3^2+y0*y2*y3*y4-y0*y2*y4^2-3*y1*y2*y3*y4,
y0^2*y1^2+y0*y1^3+y0*y2*y3*y4+y0*y2*y4^2+y0*y3^2*y4-y0*y3*y4^2+y0*y4^3-y1*y2^2*y3+
3*y1*y2*y3*y4+y1*y3^3-y1*y3^2*y4+y1*y3*y4^2]];

//Below we give 4 lists [H,lambda,G], where lambda is a rational function in (a1,...,a4)
//and H,G are polynomials in X with coefficients that are rational functions in (a1,...,a4).
//The expression G^2+4*lambda*H^3 yields the same sextic in X for each triple [H,lambda,G].
//When (1:a1:a2:a3:a4) is a point on B that does not lie in a j-plane and has a4 != 0
//then this corresponds to the 3-torsion point corresponding to the j-plane y[0]=y[i]=0
//(for i=1,...,4, in the order given)
//(The article describes H,G as homogeneous forms in (x,z), of degrees 2 and 3 respectively.
//Here we set (x,z)=(X,1) to get a more compact representation)
HLGs:=[[(a1^3*a3*a4^3+a1^2*a2^2*a4^5+a1^2*a2^2*a4^2+2*a1*a2*a3^2*a4^4-a1*a2*a3^2*a4-a2^3*a3+
a3^4*a4^3)*X^2+(a1^4*a4^4+2*a1^2*a2*a3*a4^5-a1^2*a2*a3*a4^2+2*a1*a2^3*a4^4+a1*a2^3*a4+
2*a1*a3^3*a4^4+a1*a3^3*a4+2*a2^2*a3^2*a4^3+2*a2^2*a3^2)*X+a1^3*a2*a4^3+a1^2*a3^2*a4^5+
a1^2*a3^2*a4^2+2*a1*a2^2*a3*a4^4-a1*a2^2*a3*a4+a2^4*a4^3-a2*a3^3,
(-a4^3-1)/(a1^6*a4^6-6*a1^4*a2*a3*a4^4-2*a1^3*a2^3*a4^3-2*a1^3*a3^3*a4^3+9*a1^2*a2^2*a3^2*a4^2+
6*a1*a2^4*a3*a4+6*a1*a2*a3^4*a4+a2^6+2*a2^3*a3^3+a3^6),
(a1^6*a4^6+3*a1^4*a2*a3*a4^7-3*a1^4*a2*a3*a4^4+2*a1^3*a2^3*a4^9+4*a1^3*a2^3*a4^6+3*a1^3*a3^3*a4^6
+a1^3*a3^3*a4^3+6*a1^2*a2^2*a3^2*a4^8+3*a1^2*a2^2*a3^2*a4^5+6*a1^2*a2^2*a3^2*a4^2-
3*a1*a2^4*a3*a4^4+3*a1*a2^4*a3*a4+6*a1*a2*a3^4*a4^7+a2^6-3*a2^3*a3^3*a4^3-a2^3*a3^3+
2*a3^6*a4^6+a3^6*a4^3)/(a1^3*a4^3-3*a1*a2*a3*a4-a2^3-a3^3)*X^3+(3*a1^5*a2*a4^8+
3*a1^5*a2*a4^5+6*a1^4*a3^2*a4^7+6*a1^4*a3^2*a4^4+6*a1^3*a2^2*a3*a4^9+6*a1^3*a2^2*a3*a4^6+
6*a1^2*a2^4*a4^8+9*a1^2*a2^4*a4^5+3*a1^2*a2^4*a4^2+12*a1^2*a2*a3^3*a4^8+3*a1^2*a2*a3^3*a4^5-
9*a1^2*a2*a3^3*a4^2+12*a1*a2^3*a3^2*a4^7+9*a1*a2^3*a3^2*a4^4-3*a1*a2^3*a3^2*a4+6*a1*a3^5*a4^7+
6*a1*a3^5*a4^4-3*a2^5*a3*a4^3-3*a2^5*a3+6*a2^2*a3^4*a4^6+9*a2^2*a3^4*a4^3+
3*a2^2*a3^4)/(a1^3*a4^3-3*a1*a2*a3*a4-a2^3-a3^3)*X^2+(3*a1^5*a3*a4^8+3*a1^5*a3*a4^5+
6*a1^4*a2^2*a4^7+6*a1^4*a2^2*a4^4+6*a1^3*a2*a3^2*a4^9+6*a1^3*a2*a3^2*a4^6+12*a1^2*a2^3*a3*a4^8
+3*a1^2*a2^3*a3*a4^5-9*a1^2*a2^3*a3*a4^2+6*a1^2*a3^4*a4^8+9*a1^2*a3^4*a4^5+3*a1^2*a3^4*a4^2+
6*a1*a2^5*a4^7+6*a1*a2^5*a4^4+12*a1*a2^2*a3^3*a4^7+9*a1*a2^2*a3^3*a4^4-3*a1*a2^2*a3^3*a4+
6*a2^4*a3^2*a4^6+9*a2^4*a3^2*a4^3+3*a2^4*a3^2-3*a2*a3^5*a4^3-3*a2*a3^5)/(a1^3*a4^3-
3*a1*a2*a3*a4-a2^3-a3^3)*X+(a1^6*a4^6+3*a1^4*a2*a3*a4^7-3*a1^4*a2*a3*a4^4+3*a1^3*a2^3*a4^6
+a1^3*a2^3*a4^3+2*a1^3*a3^3*a4^9+4*a1^3*a3^3*a4^6+6*a1^2*a2^2*a3^2*a4^8+3*a1^2*a2^2*a3^2*a4^5
+6*a1^2*a2^2*a3^2*a4^2+6*a1*a2^4*a3*a4^7-3*a1*a2*a3^4*a4^4+3*a1*a2*a3^4*a4+2*a2^6*a4^6+
a2^6*a4^3-3*a2^3*a3^3*a4^3-a2^3*a3^3+a3^6)/(a1^3*a4^3-3*a1*a2*a3*a4-a2^3-a3^3)
],[
a1*a4*X^2+a2*X-a3,
-a1^3*a4^9-a1^3*a4^6+3*a1*a2*a3*a4^7+3*a1*a2*a3*a4^4+a2^3*a4^6+a2^3*a4^3+a3^3*a4^6+a3^3*a4^3,
(2*a1^3*a4^6+a1^3*a4^3-3*a1*a2*a3*a4^4+a2^3-a3^3*a4^3)*X^3+(3*a1^2*a2*a4^5+3*a1^2*a2*a4^2-
3*a2^2*a3*a4^3-3*a2^2*a3)*X^2+(-3*a1^2*a3*a4^5-3*a1^2*a3*a4^2+3*a2*a3^2*a4^3+3*a2*a3^2)*X-
a1^3*a4^3-3*a1*a2*a3*a4^4-a2^3*a4^3-2*a3^3*a4^3-a3^3
],[
a2*X^2-a3*X-a1*a4,
a1^3*a4^9+a1^3*a4^6-3*a1*a2*a3*a4^7-3*a1*a2*a3*a4^4-a2^3*a4^6-a2^3*a4^3-a3^3*a4^6-a3^3*a4^3,
(a1^3*a4^3+3*a1*a2*a3*a4^4+2*a2^3*a4^3+a2^3+a3^3*a4^3)*X^3+(3*a1^2*a2*a4^5+3*a1^2*a2*a4^2-
3*a2^2*a3*a4^3-3*a2^2*a3)*X^2+(-3*a1^2*a3*a4^5-3*a1^2*a3*a4^2+3*a2*a3^2*a4^3+3*a2*a3^2)*X-
2*a1^3*a4^6-a1^3*a4^3+3*a1*a2*a3*a4^4+a2^3*a4^3-a3^3
],[
(a1*a2^2*a4^3+a1*a2^2)*X^2+(a1^3*a4^2+a1*a2*a3*a4^3-2*a1*a2*a3+a2^3*a4^2+a3^3*a4^2)*X+
a1*a3^2*a4^3+a1*a3^2,
(-a1^3*a4^3+3*a1*a2*a3*a4+a2^3+a3^3)/(a1^6+6*a1^4*a2*a3*a4+2*a1^3*a2^3+2*a1^3*a3^3+
9*a1^2*a2^2*a3^2*a4^2+6*a1*a2^4*a3*a4+6*a1*a2*a3^4*a4+a2^6+2*a2^3*a3^3+a3^6),
(a1^6*a4^3+6*a1^4*a2*a3*a4^4+2*a1^3*a2^3*a4^6+5*a1^3*a2^3*a4^3+a1^3*a2^3+2*a1^3*a3^3*a4^3+
9*a1^2*a2^2*a3^2*a4^5+3*a1*a2^4*a3*a4^4-3*a1*a2^4*a3*a4+6*a1*a2*a3^4*a4^4-a2^6+a2^3*a3^3*a4^3
-a2^3*a3^3+a3^6*a4^3)/(a1^3+3*a1*a2*a3*a4+a2^3+a3^3)*X^3+(3*a1^5*a2*a4^5+3*a1^5*a2*a4^2+
3*a1^3*a2^2*a3*a4^6-3*a1^3*a2^2*a3+3*a1^2*a2^4*a4^5+3*a1^2*a2^4*a4^2+3*a1^2*a2*a3^3*a4^5+
3*a1^2*a2*a3^3*a4^2+9*a1*a2^3*a3^2*a4^4+9*a1*a2^3*a3^2*a4+3*a2^5*a3*a4^3+3*a2^5*a3+
3*a2^2*a3^4*a4^3+3*a2^2*a3^4)/(a1^3+3*a1*a2*a3*a4+a2^3+a3^3)*X^2+(-3*a1^5*a3*a4^5-
3*a1^5*a3*a4^2-3*a1^3*a2*a3^2*a4^6+3*a1^3*a2*a3^2-3*a1^2*a2^3*a3*a4^5-3*a1^2*a2^3*a3*a4^2-
3*a1^2*a3^4*a4^5-3*a1^2*a3^4*a4^2-9*a1*a2^2*a3^3*a4^4-9*a1*a2^2*a3^3*a4-3*a2^4*a3^2*a4^3-
3*a2^4*a3^2-3*a2*a3^5*a4^3-3*a2*a3^5)/(a1^3+3*a1*a2*a3*a4+a2^3+a3^3)*X+(-a1^6*a4^3-
6*a1^4*a2*a3*a4^4-2*a1^3*a2^3*a4^3-2*a1^3*a3^3*a4^6-5*a1^3*a3^3*a4^3-a1^3*a3^3-
9*a1^2*a2^2*a3^2*a4^5-6*a1*a2^4*a3*a4^4-3*a1*a2*a3^4*a4^4+3*a1*a2*a3^4*a4-a2^6*a4^3-
a2^3*a3^3*a4^3+a2^3*a3^3+a3^6)/(a1^3+3*a1*a2*a3*a4+a2^3+a3^3)]];

//Below is a triple (H,lambda,G) such that G^2+4*lambda*H^3 is -3*F, where F is the sextic
//defined by HLGs above. This triple corresponds to the 3-torsion points that the j-plane
//y0+...+y4=y0+y4=0 marks if (1:a1:a2:a3:a4) is a point on the Burkhardt quartic. Note that
//the identity of the sextics holds regardless of whether (1:a1:a2:a3:a4) satisfy the Burkhardt
//relation. We do need the Burkhardt relation to get the other cyclic order 3 subgroups defined
//over the base field.
HLGdual:=[(a1^2*a4^2+a1*a2*a4^3-a1*a2*a4^2-a1*a2*a4-a1*a3*a4^2+a2^2+a2*a3*a4+a3^2*a4^2)*X^2+(-a1^2*a4^3+
a1^2*a4^2+a1*a2*a4^3+a1*a2*a4+a1*a3*a4^3+a1*a3*a4+a2^2*a4^2-a2^2*a4-2*a2*a3+a3^2*a4^2-
a3^2*a4)*X+a1^2*a4^2-a1*a2*a4^2+a1*a3*a4^3-a1*a3*a4^2-a1*a3*a4+a2^2*a4^2+a2*a3*a4+a3^2,
(-3*a1^2*a4^4+3*a1^2*a4^3-3*a1^2*a4^2-3*a1*a2*a4^3+3*a1*a2*a4^2-3*a1*a2*a4-3*a1*a3*a4^3+
3*a1*a3*a4^2-3*a1*a3*a4-3*a2^2*a4^2+3*a2^2*a4-3*a2^2+3*a2*a3*a4^2-3*a2*a3*a4+3*a2*a3-
3*a3^2*a4^2+3*a3^2*a4-3*a3^2)/(a1^2*a4^4+2*a1^2*a4^3+a1^2*a4^2-2*a1*a2*a4^3-4*a1*a2*a4^2-
2*a1*a2*a4-2*a1*a3*a4^3-4*a1*a3*a4^2-2*a1*a3*a4+a2^2*a4^2+2*a2^2*a4+a2^2+2*a2*a3*a4^2+
4*a2*a3*a4+2*a2*a3+a3^2*a4^2+2*a3^2*a4+a3^2),
(-3*a1^4*a4^5+3*a1^4*a4^4-6*a1^3*a2*a4^6+6*a1^3*a2*a4^5-3*a1^3*a2*a4^4-3*a1^3*a2*a4^3+
6*a1^3*a3*a4^5-3*a1^3*a3*a4^4+3*a1^3*a3*a4^3+6*a1^2*a2^2*a4^6+6*a1^2*a2^2*a4^4+6*a1^2*a2^2*a4^2+
3*a1^2*a2*a3*a4^6-3*a1^2*a2*a3*a4^5+6*a1^2*a2*a3*a4^4+6*a1^2*a2*a3*a4^3-6*a1^2*a2*a3*a4^2-
6*a1^2*a3^2*a4^5+6*a1^2*a3^2*a4^4-6*a1^2*a3^2*a4^3-6*a1*a2^3*a4^4+6*a1*a2^3*a4^3-3*a1*a2^3*a4^2-
3*a1*a2^3*a4-3*a1*a2^2*a3*a4^5+3*a1*a2^2*a3*a4^4-6*a1*a2^2*a3*a4^3-6*a1*a2^2*a3*a4^2+
6*a1*a2^2*a3*a4+9*a1*a2*a3^2*a4^5-3*a1*a2*a3^2*a4^4-6*a1*a2*a3^2*a4^3+6*a1*a2*a3^2*a4^2+
3*a1*a3^3*a4^5-3*a1*a3^3*a4^4+6*a1*a3^3*a4^3-3*a2^4*a4+3*a2^4-6*a2^3*a3*a4^2+3*a2^3*a3*a4-
3*a2^3*a3-6*a2^2*a3^2*a4^3+6*a2^2*a3^2*a4^2-6*a2^2*a3^2*a4-3*a2*a3^3*a4^4+3*a2*a3^3*a4^3-
6*a2*a3^3*a4^2+3*a3^4*a4^4-3*a3^4*a4^3)/(a1*a4^2+a1*a4-a2*a4-a2-a3*a4-a3)*X^3+(6*a1^4*a4^6
-6*a1^4*a4^5+6*a1^4*a4^4+3*a1^3*a2*a4^7-9*a1^3*a2*a4^6+12*a1^3*a2*a4^5-9*a1^3*a2*a4^4+
3*a1^3*a2*a4^3-6*a1^3*a3*a4^6+12*a1^3*a3*a4^5-12*a1^3*a3*a4^4+6*a1^3*a3*a4^3+3*a1^2*a2^2*a4^6-
15*a1^2*a2^2*a4^5+18*a1^2*a2^2*a4^4-15*a1^2*a2^2*a4^3+3*a1^2*a2^2*a4^2+9*a1^2*a2*a3*a4^6-
9*a1^2*a2*a3*a4^5+9*a1^2*a2*a3*a4^3-9*a1^2*a2*a3*a4^2+6*a1^2*a3^2*a4^6-12*a1^2*a3^2*a4^5+
18*a1^2*a3^2*a4^4-12*a1^2*a3^2*a4^3+6*a1^2*a3^2*a4^2+12*a1*a2^3*a4^5-6*a1*a2^3*a4^4+
12*a1*a2^3*a4^3+6*a1*a2^3*a4+3*a1*a2^2*a3*a4^5+3*a1*a2^2*a3*a4^4+3*a1*a2^2*a3*a4^2+
3*a1*a2^2*a3*a4+6*a1*a2*a3^2*a4^5-12*a1*a2*a3^2*a4^4+6*a1*a2*a3^2*a4^2-12*a1*a2*a3^2*a4+
6*a1*a3^3*a4^5-12*a1*a3^3*a4^4+12*a1*a3^3*a4^3-6*a1*a3^3*a4^2-6*a2^4*a4^3+6*a2^4*a4^2-6*a2^4*a4
-3*a2^3*a3*a4^4+3*a2^3*a3*a4^3-12*a2^3*a3*a4^2+9*a2^3*a3*a4-9*a2^3*a3+9*a2^2*a3^2*a4^4-
9*a2^2*a3^2*a4^3+18*a2^2*a3^2*a4^2-9*a2^2*a3^2*a4+9*a2^2*a3^2+12*a2*a3^3*a4^3-12*a2*a3^3*a4^2+
12*a2*a3^3*a4+6*a3^4*a4^4-6*a3^4*a4^3+6*a3^4*a4^2)/(a1*a4^2+a1*a4-a2*a4-a2-a3*a4-a3)*X^2+
(6*a1^4*a4^6-6*a1^4*a4^5+6*a1^4*a4^4-6*a1^3*a2*a4^6+12*a1^3*a2*a4^5-12*a1^3*a2*a4^4+
6*a1^3*a2*a4^3+3*a1^3*a3*a4^7-9*a1^3*a3*a4^6+12*a1^3*a3*a4^5-9*a1^3*a3*a4^4+3*a1^3*a3*a4^3+
6*a1^2*a2^2*a4^6-12*a1^2*a2^2*a4^5+18*a1^2*a2^2*a4^4-12*a1^2*a2^2*a4^3+6*a1^2*a2^2*a4^2+
9*a1^2*a2*a3*a4^6-9*a1^2*a2*a3*a4^5+9*a1^2*a2*a3*a4^3-9*a1^2*a2*a3*a4^2+3*a1^2*a3^2*a4^6-
15*a1^2*a3^2*a4^5+18*a1^2*a3^2*a4^4-15*a1^2*a3^2*a4^3+3*a1^2*a3^2*a4^2+6*a1*a2^3*a4^5-
12*a1*a2^3*a4^4+12*a1*a2^3*a4^3-6*a1*a2^3*a4^2+6*a1*a2^2*a3*a4^5-12*a1*a2^2*a3*a4^4+
6*a1*a2^2*a3*a4^2-12*a1*a2^2*a3*a4+3*a1*a2*a3^2*a4^5+3*a1*a2*a3^2*a4^4+3*a1*a2*a3^2*a4^2+
3*a1*a2*a3^2*a4+12*a1*a3^3*a4^5-6*a1*a3^3*a4^4+12*a1*a3^3*a4^3+6*a1*a3^3*a4+6*a2^4*a4^4-
6*a2^4*a4^3+6*a2^4*a4^2+12*a2^3*a3*a4^3-12*a2^3*a3*a4^2+12*a2^3*a3*a4+9*a2^2*a3^2*a4^4-
9*a2^2*a3^2*a4^3+18*a2^2*a3^2*a4^2-9*a2^2*a3^2*a4+9*a2^2*a3^2-3*a2*a3^3*a4^4+3*a2*a3^3*a4^3-
12*a2*a3^3*a4^2+9*a2*a3^3*a4-9*a2*a3^3-6*a3^4*a4^3+6*a3^4*a4^2-6*a3^4*a4)/(a1*a4^2+a1*a4-
a2*a4-a2-a3*a4-a3)*X+(-3*a1^4*a4^5+3*a1^4*a4^4+6*a1^3*a2*a4^5-3*a1^3*a2*a4^4+3*a1^3*a2*a4^3
-6*a1^3*a3*a4^6+6*a1^3*a3*a4^5-3*a1^3*a3*a4^4-3*a1^3*a3*a4^3-6*a1^2*a2^2*a4^5+6*a1^2*a2^2*a4^4-
6*a1^2*a2^2*a4^3+3*a1^2*a2*a3*a4^6-3*a1^2*a2*a3*a4^5+6*a1^2*a2*a3*a4^4+6*a1^2*a2*a3*a4^3-
6*a1^2*a2*a3*a4^2+6*a1^2*a3^2*a4^6+6*a1^2*a3^2*a4^4+6*a1^2*a3^2*a4^2+3*a1*a2^3*a4^5-
3*a1*a2^3*a4^4+6*a1*a2^3*a4^3+9*a1*a2^2*a3*a4^5-3*a1*a2^2*a3*a4^4-6*a1*a2^2*a3*a4^3+
6*a1*a2^2*a3*a4^2-3*a1*a2*a3^2*a4^5+3*a1*a2*a3^2*a4^4-6*a1*a2*a3^2*a4^3-6*a1*a2*a3^2*a4^2+
6*a1*a2*a3^2*a4-6*a1*a3^3*a4^4+6*a1*a3^3*a4^3-3*a1*a3^3*a4^2-3*a1*a3^3*a4+3*a2^4*a4^4-
3*a2^4*a4^3-3*a2^3*a3*a4^4+3*a2^3*a3*a4^3-6*a2^3*a3*a4^2-6*a2^2*a3^2*a4^3+6*a2^2*a3^2*a4^2-
6*a2^2*a3^2*a4-6*a2*a3^3*a4^2+3*a2*a3^3*a4-3*a2*a3^3-3*a3^4*a4+3*a3^4)/(a1*a4^2+a1*a4-a2*a4-
a2-a3*a4-a3)];

//magma code to verify that the expressions given indeed satisfy the relations claimed,
L:=[[Evaluate(p,phi):p in q]:q in psis];
assert {[Pt|c/l[1]: c in l]:l in L} eq{[1,t1,t2,t3]};
assert Evaluate(B,phi) eq 0;
V:={c[3]^2+4*c[2]*c[1]^3:c in HLGs};
assert #V eq 1;
F:=Rep(V);
G:=c[3]^2+4*c[2]*c[1]^3 where c:=HLGdual;
assert G/F eq -3;
\end{verbatim}
\end{tiny}
\end{document}